\documentclass[11pt]{article}
\usepackage{float}

\usepackage[T1]{fontenc}
\usepackage[utf8]{inputenc}
\usepackage[a4paper,margin=1in]{geometry}
\usepackage{lmodern}
\usepackage{amsmath,amssymb,amsthm,mathtools}
\usepackage{graphicx}
\graphicspath{{./graphics/}}
\usepackage{enumitem}
\setlist{nosep}
\usepackage{tikz}
\usepackage{tikz-cd}
\usetikzlibrary{matrix,calc,arrows}
\usepackage{hyperref}
\hypersetup{hidelinks}
\newcommand{\FF}{{\mathfrak F}}

\newcommand{\maps}{\colon}

\newcommand{\Ti}{ \mathsf{Tangi}}
\newcommand{\rTi}{ \mathsf{rTangi}}
\newcommand{\fTi}{ \mathsf{fTangi}}
\usepackage{comment}
    \newcommand{\ha}[1]{\textcolor{black}{#1}}
\usepackage{ MnSymbol }
\newcommand{\bangup}{\mathord{!}}              % 0 -> 1
\newcommand{\End}{\mathrm{End}}

\newcommand{\CC}{{\mathcal C}}
\newcommand{\N}{{\mathbb N}} 
\newcommand{\C}{{\mathbb C}}

\newcommand{\id}{{\mathrm{id}}}

\newcommand{\TL}{TL}

\newcommand{\slidy}{slidealisation}
\newcommand{\hal}{${\frac{1}{2}}$-monoidal}
\newcommand{\ham}{${\frac{1}{2}}$-monoidal category} 
\newcommand{\hams}{${\frac{1}{2}}$-monoidal categories}

\newcommand{\ihams}{slideable ${\frac{1}{2}}$-monoidal categories}
\usepackage{comment}
\newcommand{\half}{\frac{1}{2}}
\DeclareMathOperator{\Hom}{Hom}

\newtheorem{theorem}{Theorem}[section]
\newtheorem{proposition}[theorem]{Proposition}
\newtheorem{lemma}[theorem]{Lemma}

\theoremstyle{definition}
\newtheorem{definition}[theorem]{Definition}
\newtheorem{remark}[theorem]{Remark}
\newtheorem{example}[theorem]{Example}
\newtheorem{examples}[theorem]{Examples}
\newtheorem{notation}[theorem]{Notation}
\usepackage{authblk}
\title{Structure and Representations of a Diagrammatic Non-Commutative Tangloid Algebra}

\author[1]{Hadeel B. Albeladi}
\author[2]{Sofia Lambropoulou}
\affil[1]{Department of Mathematics, Faculty of Sciences \& Arts, King Abdulaziz University, Rabigh, Saudi Arabia, E-mail: aealbeladi@kau.edu.sa}
\affil[2]{School of Applied Mathematical and Physical Sciences, National Technical University of Athens, Greece, E-mail: sofia@math.ntua.gr}

\date{\today}

\begin{document}
\maketitle
\begin{abstract}
We introduce the \emph{tangloid algebra} $\mathcal{T}_n$, a diagrammatic algebra arising from the extended tangloid category $\Ti$, and its subalgebra, the \emph{braidoid algebra} $\mathcal{B}_n$. 

The construction begins with the unoriented tangloid category $UTC$, inspired by Turaev's theory of knotoids, which is obtained from the unoriented welded tangleoid category $UWTC$ by removing the welded relation while retaining the forbidden moves and imposing some additional relations. Within this framework, the  braidoid category arises naturally as a subcategory of $UTC$. We then introduce the extended tangloid category $\Ti$ by adjoining a placeholder morphism $\emptyset$, which allows the generators to be extended to endomorphisms of a fixed $n$-box. The interaction of the placeholder morphism with the crossing morphisms gives rise to certain diagonal morphisms, $/$ and $\backslash$, which are conveniently used in the  defining relations of $\Ti$. The corresponding extended braidoid category $\mathbf{Brd}_{\Ti}$ is realised as a subcategory of $\Ti$. 

For each $n\geq 0$, we then define the tangloid algebra $\mathcal{T}_n$ by linearising the endomorphism algebra $\operatorname{End}_{\Ti}(n)$, as a unital $\mathbb{C}$--algebra presented by diagrammatic generators and relations. We also define the braidoid subalgebra $\mathcal{B}_n$, which provides an algebraic framework for the theory of braidoids. We further introduce the reduced tangloid algebra $\delta\mathcal{T}_n$ by imposing two additional reduction relations for trivial knot and trivial knotoid components, and we construct a natural diagrammatic \textit{bilinear pairing} on $\delta\mathcal{T}_n$ using diagrammatic reflection, composition, and closure.

Our categorical and algebraic setting provides a theoretical framework for  studying diagrammatic structures with intrinsic endpoints, namely knotoids, linkoids and braidoids,  and related diagramm algebras. 
\end{abstract}

\noindent\textbf{MSC 2020:} Primary 57K10, 18M05; Secondary 20C08, 05E10, 16G99.\\
\noindent\textbf{Keywords:}  tangloid; knotoid; linkoid; braidoid; unoriented tangloid category; placeholder morphism; extended unoriented tangloid category; braidoid category; tangloid algebra; braidoid algebra;  bilinear pairing.

\section{Introduction}\label{sec1}

Diagrammatic categories arising from knots, links, and tangles play a fundamental role in low-dimensional topology, representation theory,  quantum algebra; see for example \cite{reshetikhin1990ribbon, turaev1990operator, baez1994knots, yetter2001functorial, Ohtsuki2002, kauffman2013knots}.
%buck2009applications, banagl2011mathematics,kauffman1995knot,
Classical diagram algebras such as the Temperley--Lieb algebra \cite{temperley1971percolation} and the BMW algebra \cite{murakami1987kauffman, birman1989braids} provide powerful algebraic and categorical frameworks for the study of braid groups, knot invariants, the Jones polynomial, quantum groups, and topological quantum field theories; see for example  \cite{kassel1995quantum, leduc1997ribbon, jones1985polynomial}. 
% morton2010basis
These structures are typically realized through diagrammatic generators and local isotopy relations inside monoidal or braided tensor categories \cite{reshetikhin1990ribbon, kassel1995quantum, turaev2010quantum, Ohtsuki2002}. 
%kock2004frobenius

More recently, categorical approaches have been extended to open-ended diagrammatic objects, the knotoids \cite{albeladi_2022, gugumcu2021quantum, moltmaker2022framed}. 
Turaev introduced the theory of knotoids \cite{turaev2012knotoids}  as immersions of the unit interval in an oriented surface, up to the classical Reidemeister moves (relations $[T_{1}]-[T_{5}]$ in Figure~\ref{fig:generators_UTC}) and respecting forbidden endpoint moves, thereby extending classical knot theory to diagrams with free endpoints. An example of a knotoid is illustrated in the left-hand side of Figure~\ref{fig:example_knotoid_UTC}.  
Subsequently, the first author introduced   welded tangle-oid categories, generalizing  classical tangle categories to strict monoidal categories including endpoints as well as welded crossings \cite{albeladi_2022}, while G\"ug\"umc\"u and Kauffman \cite{gugumcu2021quantum} and independently Moltmaker \cite{moltmaker2022framed}, developed a categorical framework for Morse multi-knotoids, whereby knot, link, and knotoid diagrams are interpreted as morphisms in a braided monoidal category generated by crossings, cups, caps and endpoint morphisms. 
Their constructions provide a diagrammatic setting for state-sum models and quantum invariants of multi-knotoids and framed multi-knotoids, extending the Reshetikhin--Turaev formalism to knotoid-type objects. 
In all constructions, the endpoints of a welded tangle-oid/Morse knotoid are encoded categorically by morphisms
\[
\upfilledspoon \colon [0]\to[1],
\qquad
\downfilledspoon \colon [1]\to[0],
\]
so that the resulting category may be viewed as an extension of the classical tangle category (cf. for example \cite{turaev1990operator, Ohtsuki2002}) by allowing open strands with free endpoints (see the middle illustration of Figure~\ref{fig:example_knotoid_UTC} for an example of a tangle-oid or tangloid). 

\begin{figure}[H]
    \centering
    \includegraphics[width=0.65\linewidth]{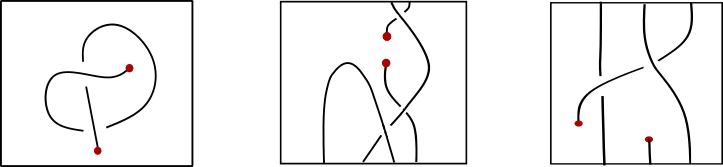}
    \caption{Examples of  a knotoid, a $(2,4)$-tangloid and a braidoid.}
    \label{fig:example_knotoid_UTC}
\end{figure}

The present work is related in spirit to these categorical approaches to knotoids but independent, and differs from them in several important aspects.
 We first define in Section~\ref{subsec:utc} the \textit{unoriented tangloid category}  $UTC$ in terms of generating morphisms and relations among them (see Figures~\ref{fig:UTCg} and \ref{fig:generators_UTC}), taking the approach of sesquicategories \cite{hazewinkel1996handbook} for constructing a presentation of a strict monoidal category.  In particular, $UTC$ is formulated as a strict monoidal category rather than  a braided monoidal framework, so associativity and unit laws hold strictly, without the need for associativity isomorphisms or coherence maps. Furthermore, the category incorporates nontrivial endpoint interaction relations, where endpoint morphisms interact diagrammatically with cups and caps through local relations corresponding to  local isotopies. 
 
 The category $UTC$ is constructed similar to the welded tangle-oid category $UWTC$ in \cite{albeladi_2022}, which extends the classical tangle category  by endpoint morphisms and by welded crossings. However, the category $UTC$ omits the welded crossing structure and respects both forbidden moves for endpoints, whereby strands are not allowed
to pass over or under the endpoints, as supposed to $UWTC$ where only one forbidden move is respected. Moreover, some local isotopy moves reflecting horizontal shifts are included in $UTC$ (relations $[T_{8}]-[T_{11}]$ in Figure~\ref{fig:generators_UTC}). In addition, we define the regular tangloid category $rUTC$, by omitting the first Reidemeister moves (relations $[T_{2}]$ and $[T_{3}]$ in Figure~\ref{fig:generators_UTC}), and the framed  tangloid category $fUTC$, by substituting the first Reidemeister moves by their framed analogues (see Figure~\ref{fig:framed R1}). 
 
 Unlike the Morse knotoid braided monoidal  categories in \cite{gugumcu2021quantum} and  \cite{moltmaker2022framed}, whose primary role is the construction of quantum invariants for knotoids/framed knotoids, the  tangloid category is developed as a diagrammatic monoidal framework suitable for algebraic operations, categorical presentations, and representation-theoretic constructions.  G\"ug\"umc\"u-Kauffman \cite{gugumcu2021quantum} formulated the Morse category  of planar knotoids under regular isotopy moves.  In the Morse category they define the  rotation number for the  oriented open-ended state component. In addition, Moltmaker’s framed and biframed spherical knotoid categories \cite{moltmaker2022framed}    introduce framing and coframing structures related to Reshetikhin--Turaev invariants. The main conceptual difference of the tangloid category from the above works is that in $UTC$ the endpoints are allowed to swing in their diagrammatic regions (relations $[T_{12}]$ and $[T_{13}]$ in Figure~\ref{fig:generators_UTC}). 

\smallbreak
Departing from $UTC$, in this paper we introduce an
\textit{extended unoriented tangloid category}, denoted by $\Ti$. See  Section~\ref{sec:eutc}. 
To construct this category, we first extend $UTC$ by adjoining the
auxiliary empty morphisms $\emptyset_t$ and $\emptyset_b$, which act as
place-keepers at the top and bottom of a diagram respectively, and are
represented diagrammatically by points. These auxiliary morphisms enable
the morphisms of $UTC$ to be extended to morphisms from object $n$ to
object $n$, for any fixed $n$. We then obtain $\Ti$ by adjoining the
empty generator $\emptyset$ to the generating set $E(\beta)$, see Figure~\ref{fig:EUTC_g}. The purpose of this extension
is to provide a fixed-$n$ diagrammatic framework in which the relevant
diagrams can be interpreted as endomorphisms. The presence of the empty and auxiliary morphisms comes with natural diagrammatic composition and tensoring rules detailed in Subsection~\ref{sec:rulesempty}, such as that any arc that becomes incident with a place-keeper point is contracted. The addition of the  empty morphism  leads to extra relations in $\Ti$  beyond those defining $UTC$, including, for example, $[T_4]-[T_5]^\prime$ in
Figure~\ref{fig:placeholder}. 

The composition of a crossing with the placeholder morphism $\emptyset$
gives rise to two special elements of $\Ti$, the \textit{diagonal
morphisms} acting from object $2$ to object $2$, denoted by
$\backslash$ and $/$ (see Figure~\ref{fig:diagonal_morphisms}).
These diagonal morphisms are not additional generators; rather, they are
derived morphisms whose notation is introduced to simplify the expression of the defining relations. 
 The construction and presentation of $\Ti$
are developed in detail in Section~\ref{sec:eutc}, where we also introduce
its regular and framed analogues, $\rTi$ and $\fTi$. 

The construction of $\Ti$ enables us now to define the associated \emph{tangloid algebra}
\[
\mathcal{T}_n=\mathbb C\,\End_{\Ti}(n).
\]
This algebra is viewed as a diagram algebra generated by the tangloid diagrams of degree $n$ modulo the $\Ti$ local relations. We also introduce
the regular and framed analogues, $r\mathcal{T}_n$ and~$f\mathcal{T}_n$.  
\smallbreak

 In this paper we further  introduce the \emph{reduced tangloid algebra}, denoted by $\delta\mathcal{T}_n$, obtained from the tangloid algebra  $\mathcal{T}_n$ by imposing two additional relations under which a trivial knot and a trivial knotoid are each identified with the scalar $\delta \in \C$.  We then construct a \textit{bilinear pairing} on the tangloid algebra $\delta\mathcal{T}_n$ using diagrammatic reflection, composition, and closure, which plays a fundamental role in the algebraic theory developed in this paper. See Section~\ref{ss:bilinear pairing}.

\smallbreak
The theory of braidoids, introduced in
\cite{Gugumcu2017}, cf. \cite{gugumcu2017knotoids,gugumcu2021braidoids, GugumcuKauffmanLambropoulou}, extends classical braid theory to 
the setting of knotoids. It establishes a diagrammatic framework that
captures the topology of knotoids while preserving many of the algebraic
features that make braid theory a powerful tool in knot theory.   A braidoid is a braid-like diagram  with classical strands and two special strands emanating from the two endpoints. 

In this paper we proceed with defining the  \textit{braidoid category} $\mathbf{Brd}$ and the \textit{extended braidoid category} $\mathbf{Brd}_{\Ti}$ as natural subcategories of  $UTC$ and $\Ti$ respectively, where the generators $\cup$ and $\cap$ are omitted.  We then define the \textit{braidoid algebra} $\mathcal{B}_n$ as a subalgebra of the
tangloid algebra $\mathcal{T}_n$. The braidoid algebra $\mathcal{B}_n$ provides an algebraic
structure on the set of braidoids, by means of generators and a full set of relations, sought in
\cite{gugumcu2017knotoids,Gugumcu2017}, thus filling a gap in the literature. In \cite{gugumcu2017knotoids,Gugumcu2017} the diagonal elements are included as building blocks, while here these are derived elements. Also, a set of relations is found but not 
shown to be a complete set. The above are presented in Section~\ref{sec:subcategory_braidoids} and Section~\ref{ss:subalgebra_of_braidoids}. 

A related extension of classical braid theory is the \textit{inverse braid monoid}
$IB_n$, introduced by Easdown and Lavers \cite{EasdownLavers2004}, whose
elements may be viewed as partial braids obtained by deleting some strands
of an ordinary braid. 
The extended braidoid category framework may then be viewed as an endpoint-enhanced analogue of the partial-braid setting underlying the inverse braid monoid, that encompasses the endpoint generators which  retain the open-strand information diagrammatically. At the algebraic level, this suggests a natural extension of the monoid algebra $\mathbb{C}[IB_n]$ by the braidoid algebra $\mathcal{B}_n$. This is a subject of further investigation.

\smallbreak
We view our tangloid categories as simultaneously extending  the classical tangle category and providing  new categorical models for the theory of knotoids, multi-knotoids, which are immersions of the unit interval and a number of circles in an oriented surface, and linkoids, immersions of some copies of the unit interval  in an oriented surface, as well as for braidoids. Recent applications of tangloid structures in mathematical physics and biosystems further motivate this point of view; see, for example, \cite{ozel2024applications, ozel2024biosystems}. 
 The  algebraic structures introduced in this work share features with classical diagram algebras such as the Temperley--Lieb and the BMW algebras \cite{birman1989braids, murakami1987kauffman}, as well as with the Motzkin algebra \cite{Benkart2011Motzkin}, and the inverse braid monoid, while simultaneously incorporating endpoint-type structures analogous to those appearing in knotoid theory. We further explore these threads in sequel work. 
 
\smallbreak
The paper is organized as follows. 
In Section~\ref{sec:utc}, we define the Category $UTC$ and its local relations. 
In Section~\ref{sec:eutc}, we introduce the Extended Unoriented Tangloid Category $\Ti$. In Section~\ref{sec:subcategory_braidoids}, we explain how can the notion of braidoid can be interpreted inside $UTC$ and $\Ti$ by means of the subcategories $\mathbf{Brd}$ and $\mathbf{Brd}_{\Ti}$.
Section~\ref{sec:tangloid-algebra} is devoted to the construction of the tangloid algebra $\mathcal{T}_n$ and its diagrammatic presentation. In Section~\ref{ss:subalgebra_of_braidoids}, we introduce the subalgebra $\mathcal{B}_n$ of braidoids.  Finally, in Section~\ref{sec:pairing-matrix}, we define a standard diagrammatic pairing on the reduced tangloid algebra $\delta\mathcal{T}_n$.

%%%%%%%%%%%%%%%%%%%%%%%%%%%%%%%%%%%%%%%%%%%%%%%%%%%%%%%%%%%

\section{An Unoriented Tangloid Category}\label{sec:utc}

The aim of this section is to define an unoriented tangloid category, denoted $UTC$, following the first author’s construction of the unoriented welded tangloid category $UWTC$ in her thesis \cite{albeladi_2022}.  Roughly, $UTC$ is obtained from  $UWTC$  by removing the welded (virtual) crossing generator $X$ together with all respective relations in which it appears. There are, however, some crucial differences that prevent $UTC$ from being a subcategory of $UWTC$. For once, in $UTC$  both forbidden moves are respected: strands are not allowed
to pass over or under the endpoint maps $!\maps 1\to 0$ and $\text{¡}\maps 0\to 1$, while in $UWTC$ one of the endpoint forbidden moves was allowed. In both  $UWTC$ and $UTC$ we have vertical shift coming from the interchange law, but in $UTC$ we have also horizontal shifts of generators (relations $[T_8]-[T_{11}]$), which are in natural alignment with the topological counterpart category of tangloids.  Moreover,   $UTC$   uses the vertical Reidemeister moves \(R_1\), while \(UWTC\) is presented using the horizontal version of this move. These two forms are topologically analogous since the existence of one yields the other, so both categories describe the same local twisting phenomenon from different geometric viewpoints.

\smallbreak
We begin with a brief discussion of a 
\ham,\, monoidal graphs, and the associated free category, and then we  introduce our definition of the unoriented  tangloid category. 

\subsection{\ham}
%We recall the basic notions needed for the formal presentation of welded tangloids and their linearization. Standard references include \cite{albeladi_2022, higgins1971categories, kassel1995quantum}. The notion of a $\tfrac{1}{2}$--monoidal (``slideable'') structure we use follows the premonoidal framework of Power and Robinson~\cite{power1997premonoidal}.

\begin{definition}[Pre-\hal\, structure](See \cite{power1997premonoidal}).
Let \[\CC=(ob(\CC),\hom_\CC(\_\,,\_),\star ,\id_\_)\] be a category. A \emph{pre-$\half$ -monoidal structure} \[(\CC,I,\otimes_0,  \Theta_{(-)}, {}_{(-)}\Theta)\] in $\CC$ is given by:

\begin{enumerate}
    \item For each pair of objects $x$ and $y$ in $ob(\CC)$, another object $x\otimes_0 y$ in $ob(\CC)$.
    \item An object $I\in ob(\CC)$.
    \item For each morphism $f\colon x \to y$ and object $z$,
    a morphism
    $x\otimes_0 z \xrightarrow{\Theta z(f)} y \otimes_0 z.$
    We will use the notation
   \[\ha{\Big(}x\otimes_0 z \xrightarrow{\Theta z(f)} y \otimes_0 z\ha{\Big)} = \ha{\Big(}x\otimes_0 z \xrightarrow{f \Theta z} y \otimes_0 z\ha{\Big)}.\] 
    \item  For each morphism $f\colon x \to y$ and object $z$, a
    morphism
    $z\otimes_0 x \xrightarrow{z\Theta (f)} z \otimes_0 y.$
     We will use the notation
       \[\ha{\Big(}z\otimes_0 x \xrightarrow{z\Theta (f)} z \otimes_0 y\ha{\Big)} =\ha{\Big(} z\otimes_0 x \xrightarrow{z \Theta f} z \otimes_0 y\ha{\Big)}.\] 
\end{enumerate}

\end{definition}

\begin{definition}[\ham](See \cite{power1997premonoidal}).\label{def:1/2mc}
Let $\CC$ be a category exactly as in the above definition. 
A pre-\hal\, structure
 {on $\CC$}
 \[
 (\CC,I,\otimes_0,  \Theta_{(-)}, {}_{(-)}\Theta)
 \]
 {gives}
 a \emph{\ham}
\[
(\CC,I,\otimes_0,  \#_{(-)}, {}_{(-)}\#)
\]
if the following are satisfied:
\smallbreak

\begin{enumerate}
    \item $(ob(\CC),\otimes_0,I)$ is a monoid.
    \item Let $A$ be an object of $\CC$. Then the pair of assignments
    \begin{itemize} \item
    $ob(\CC) \to ob(\CC)$ such that $ x \mapsto x\otimes_0 A$,
    \item given objects $x,y$, consider \[f \in \hom_\CC(x,y) \mapsto \Theta_A(f) \colon x\otimes_0 A  \to y\otimes_0 A\] 
    \end{itemize}
    a functor $\CC \to \CC$.
    We will denote this functor by $\#_A\colon \CC \to \CC.$ 
    
    \item  Let $A$ be an object of $\CC$. Then the pair of assignments
    \begin{itemize} \item
    $ob(\CC) \to ob(\CC)$ such that $ x \mapsto A\otimes_0 x$.
    \item Given objects $x,y$, consider
     \[f \in \hom_\CC(x,y) \mapsto {}_A\Theta(f) \colon A\otimes_0 x  \to A\otimes_0 y \] 
       a functor $\CC \to \CC$ 
    We will denote this functor by ${}_A\#\colon \CC \to \CC.$
     \end{itemize}
    \item For each $A,B\in ob(\CC)$ 
    \begin{align}
      {}_A\# \circ {}_B\#&={}_{A\otimes_0 B}\#\label{axioms1/2mc1}\\
      \#_A \circ \#_B&=\#_{B\otimes_0 A}\label{axioms1/2mc2}
      \\\#_{A} \circ {}_{B}\#&={}_{B} \#\circ   \#_A\label{axioms1/2mc3}\\
       \#_{I}&=\id_{\CC} \label{axioms1/2mc4}\\ 
       {}_I\# &=\id_{\CC}\label{axioms1/2mc5}.
       \end{align}
       Here $\id_{\CC}$ is the identity functor $\CC \to \CC.$
\end{enumerate}
\end{definition}
\begin{definition}\label{def:slideable}
\ha{Let $\CC=(ob(\CC),\hom_\CC(\_\, , \_),\star, \id)$ be a category}. A \ham\, 
$(\CC, \otimes_0, I, \#_{(-)}, {}_{(-)}\#)$
is called \emph{{slideable}} if given objects $x,y,z,w$ and  a pair of morphisms $f\colon x \to y$ and $g\colon z \to w$ we have
\[ 
(f \Theta w)\star(x \Theta g)
=(y \Theta g) \star (f\Theta z)
\] 
where $f\Theta w \colon x\otimes_0 w \to y\otimes_0 w.$
  This mean that the next diagram commutes
\[
\begin{tikzcd}
x\otimes_0 z \arrow{r}{x\Theta g} \arrow[swap]{d}{f\Theta z}& x\otimes_0 w\arrow{d}{f\Theta w}\\
y\otimes_0 z \arrow[swap]{r}{y\Theta g}& y\otimes_0 w
\end{tikzcd}
\]
\end{definition}
\begin{proposition}\cite{albeladi_2022}
    There is a functor $\FF$ from the category of \hams\, to the category of \ihams.
\end{proposition}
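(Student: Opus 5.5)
The functor $\FF$ will be a ``slidealisation'' (reflector): it sends a \ham\ $\CC$ to its quotient by the congruence generated by the sliding relation. It is the identity on objects. Given $(\CC,I,\otimes_0,\#_{(-)},{}_{(-)}\#)$, let $\sim$ be the smallest congruence on $\CC$ (an equivalence relation on each hom-set, compatible with $\star$) that contains all pairs
\[
\bigl((f\Theta w)\star(x\Theta g),\ (y\Theta g)\star(f\Theta z)\bigr)
\]
for $f\colon x\to y$ and $g\colon z\to w$. We then set $\FF(\CC)=\CC/\!\sim$, with the same objects and hom-sets $\hom_\CC(x,y)/\!\sim$. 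The quotient of a category by a congruence is again a category, and the quotient functor $q\colon\CC\to\CC/\!\sim$ is full and identity on objects.

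The key step is to show that the whiskering functors $\#_A$ and ${}_A\#$ descend to $\CC/\!\sim$, that is, that they preserve $\sim$. Since $\sim$ is generated by the slide pairs under composition, reflexivity, symmetry and transitivity, and $\#_A$, ${}_A\#$ are functors, it suffices to check that each functor maps a generating slide pair to a slide pair, or at least into $\sim$.

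For $\#_A$ we use the axioms \eqref{axioms1/2mc2} and \eqref{axioms1/2mc3}:
\[
\#_A\bigl((f\Theta w)\star(x\Theta g)\bigr)=\bigl(f\Theta(w\otimes_0A)\bigr)\star\bigl(x\Theta(g\Theta A)\bigr).
\]
This is exactly the left side of the slide square for the pair $f$ and $g\Theta A\colon z\otimes_0A\to w\otimes_0A$. The other side of the square transforms in the same way. The argument for ${}_A\#$ is symmetric, using \eqref{axioms1/2mc1} and \eqref{axioms1/2mc3}.

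So $\FF(\CC)$ inherits a pre-\hal\ structure, and the axioms of Definition~\ref{def:1/2mc} hold because they hold in $\CC$ and $q$ is surjective on morphisms. The object monoid $(ob(\CC),\otimes_0,I)$ is unchanged. $\FF(\CC)$ is slideable by construction, since every generating pair is identified in the quotient.

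On morphisms of \hams, meaning functors $F\colon\CC\to\mathcal{D}$ that commute strictly with $\otimes_0$, $I$, $\#_A$ and ${}_A\#$, we define $\FF(F)$ as the induced functor on quotients. It is well defined because $F$ sends a slide pair for $(f,g)$ to the slide pair for $(F f,F g)$, hence $F$ maps $\sim_\CC$ into $\sim_{\mathcal{D}}$. Functoriality, $\FF(\id)=\id$ and $\FF(G\circ F)=\FF(G)\circ\FF(F)$, follows from uniqueness of induced maps on quotients.

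The main obstacle is the whiskering-compatibility step. One has to be careful about the precise form of the axioms $\#_A\circ\#_B=\#_{B\otimes_0A}$ and the mixed commutation \eqref{axioms1/2mc3}. In particular, whiskering $x\Theta g$ on the right by $A$ must yield $x\Theta(g\Theta A)$, and this needs \eqref{axioms1/2mc3} read at the level of morphisms. If a generating pair is sent only to a composite of slide pairs rather than a single one, that is still fine, because we only need the image to lie in $\sim$.
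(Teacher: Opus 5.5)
Your proposal is correct and is essentially the construction the paper relies on: $\FF$ is the slidealisation functor of \cite{albeladi_2022} (the paper gives no proof, only the citation). It sends a \ham\ to its quotient by the congruence generated by the sliding relations, is the identity on objects, and acts on structure-preserving functors by the induced maps on quotients. Your check via \eqref{axioms1/2mc1}--\eqref{axioms1/2mc3} that $\#_A$ and ${}_A\#$ send slide pairs to slide pairs is exactly what shows this congruence is already a \hal\ congruence, so the whiskering functors and the axioms descend to the quotient.
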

\begin{theorem}\cite{albeladi_2022}
    The category of \ihams\, and the category of strict monoidal categories are ismorphic.
\end{theorem}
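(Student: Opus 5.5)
The plan is to construct explicit functors in both directions between the category of \ihams\ (whose morphisms are functors $F\colon\CC\to\mathcal D$ strictly preserving $\otimes_0$, $I$, and the whiskering functors, so that $F\circ\#_A=\#_{F(A)}\circ F$ and $F\circ{}_A\#={}_{F(A)}\#\circ F$) and the category of strict monoidal categories with strict monoidal functors. We will then check that the two functors are mutually inverse on objects and on morphisms. Both functors will be the identity on underlying categories and on underlying functors, so only the passage between structures needs work.

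\textbf{From slideable to strict monoidal.} Given a slideable \ham\ $(\CC,\otimes_0,I,\#_{(-)},{}_{(-)}\#)$, we define $\otimes$ on objects as $\otimes_0$. For $f\colon x\to y$ and $g\colon z\to w$ we set
\[
f\otimes g := (f\Theta w)\star(x\Theta g) = (y\Theta g)\star(f\Theta z),
\]
where the equality is exactly the slideability condition of Definition~\ref{def:slideable}. We then verify the following.
\begin{enumerate}
\item $\id_x\otimes\id_z=\id_{x\otimes_0 z}$. This holds because $\#_z$ and ${}_x\#$ are functors.
\item The interchange law $(f'\star f)\otimes(g'\star g)=(f'\otimes g')\star(f\otimes g)$. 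We expand both sides using functoriality of the whiskerings, then use slideability once to commute $f\Theta$ past $y\Theta g$.
\item Strict associativity $(f\otimes g)\otimes h=f\otimes(g\otimes h)$. This follows from \eqref{axioms1/2mc1}--\eqref{axioms1/2mc3}. Both sides reduce to the composite $(f\Theta(w\otimes_0 v))\star(x\Theta g\Theta v)\star((x\otimes_0 z)\Theta h)$ after expanding, and \eqref{axioms1/2mc3} identifies $(x\Theta g)\Theta v$ with $x\Theta(g\Theta v)$.
\item Strict unit laws. These follow from \eqref{axioms1/2mc4} and \eqref{axioms1/2mc5}.
\end{enumerate}
Together with the monoid axiom on objects, these make $\otimes$ a bifunctor and $(\CC,\otimes,I)$ a strict monoidal category.

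\textbf{From strict monoidal to slideable.} Given a strict monoidal category, we define $f\Theta z:=f\otimes\id_z$ and $z\Theta f:=\id_z\otimes f$. Functoriality of $\#_A$ and ${}_A\#$ follows from bifunctoriality of $\otimes$, and axioms \eqref{axioms1/2mc1}--\eqref{axioms1/2mc5} follow from strict associativity and the unit laws. Slideability is the interchange law applied to $f\otimes g$ written in its two factorisations.

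\textbf{Mutual inverses.} Going from strict monoidal to slideable and back gives
\[
(f\otimes\id_w)\star(\id_x\otimes g)=f\otimes g
\]
by interchange. Going the other way, $f\otimes\id_w$ recovers $f\Theta w$ because $w\Theta\id_w=\id$. On morphisms, a functor commuting with the whiskerings preserves $\otimes$, since $\otimes$ is built from whiskerings and composition. Conversely, a strict monoidal functor preserves $-\otimes\id$ and $\id\otimes-$. Hence the two constructions define an isomorphism of categories.

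\textbf{Main obstacle.} The step I expect to be hardest is the interchange law. It requires exactly one use of slideability in the middle of the expression, and one must make sure it is invoked with the correct source and target objects. The associativity check is also delicate: the convention $\#_A\circ\#_B=\#_{B\otimes_0 A}$ means whiskering on the right reverses the order, and the bookkeeping of which object is adjoined on which side must be tracked carefully.
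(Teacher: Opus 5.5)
Your proof is correct and self-contained; the paper gives no proof of its own here (it imports the result from \cite{albeladi_2022}), and your construction is the standard argument: $f\otimes g:=(f\Theta w)\star(x\Theta g)$ in one direction, $f\Theta z:=f\otimes\id_z$ and $z\Theta f:=\id_z\otimes f$ in the other, both the identity on underlying categories and functors (the strict form of the familiar fact that a premonoidal structure in which every morphism is central is monoidal). Only cosmetic repairs are needed: the interchange step uses the slide for the pair $f\colon x\to y$, $g'\colon w\to w'$, i.e.\ $(f\Theta w')\star(x\Theta g')=(y\Theta g')\star(f\Theta w)$; the round trip needs $x\Theta\id_w=\id_{x\otimes_0 w}$ (and symmetrically $\id_z\Theta y=\id_{z\otimes_0 y}$) rather than ``$w\Theta\id_w=\id$''; and you should state explicitly, since the paper never does, that morphisms of $\frac{1}{2}$-monoidal categories are functors strictly preserving $\otimes_0$, $I$ and both whiskerings.
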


%%%%%%%%%%%%%%%%%%%%%%%%%%%%%%%%%%%%%%%%%%%%
\subsection{ Presentation of strict monoidal categories} \label{ss:mcpres}

Let $\beta = (V,E(\beta),\otimes_0,e,\delta_1,\delta_2),$ be a {\it monoidal graph} \cite{albeladi_2022}, where $V$ is the set of vertices, $E(\beta)$ is the set of edges, $\otimes_0$ is the tensor product on vertices, $e$ is the identity element in $V$, and $\delta_1$, $\delta_2$ are the incidence maps where $\delta_1,\delta_2\maps E(\beta)\to V$. The monoidal graph   $\beta$  is a graph  with a monoid structure in the set of vertices. Let $\beta^*$ be the  extent of  $\beta$ which is a monoidal graph that can tensor vertices in $V$ with edges in $E$. Let $P(\beta^*)$ be the free category over $\beta^*$. Then $\Omega(\beta^*)=(P(\beta^*), e, \otimes_0, ({}_a\#)_{a\in V}, (\#_b)_{b\in V}) $ is the {\it free strict monoidal category}, where for all $a,b,k\in V$ \[
{}_a\#_b(k)=a\otimes_0 k\otimes_0 b = a+k+b,
\]
while on a generating arrow $(f \maps k\to k') \in E(\beta) $ it acts as
\[
{}_a\#_b(f):\ a+k+b \xrightarrow{\ a\ \Theta\ f\ \Theta\ b\ } a+k'+b.
\]
Informally, $\Theta$ denotes horizontal juxtaposition.

\smallbreak

To obtain a presentation of a strict monoidal category, one first considers presentations of \hams\,. This is achieved by introducing the notion of a \hal\, congruence and then defining the \hal\, closure $\overline{\underline{W}}$ of a congruence template $W$. The free \ham\ is then quotiented by $\overline{\underline{W}}$. Finally, applying the \slidy\ functor to this quotient yields the desired presentation of the strict monoidal category. We do not recall this construction in detail here; the interested reader is referred to \cite[Chapter~6]{albeladi_2022} for a complete treatment.

%%%%%%%%%%%%%%%%%%%%%%%%%%%%%%%%%%%%%%%%%%%%%%%

\subsection{The unoriented tangloid category $UTC$} \label{subsec:utc}

We now fix the generating data and relations for the tangloid category. Let $V=\N$, and 
\[
E(\beta)=\{X_+,X_-, \cup, \cap, \text{¡}, \bangup\},
\]
with source and target objects determined by the following incidence maps:
\begin{align*}
\delta_1 X_+&=\delta_2 X_+=2,&\ \ \delta_1 X_-&=\delta_2 X_-=2,\\
\delta_1\,\cup&=2,&\delta_2\,\cup&=0,&\delta_1\,\cap&=0,&\delta_2\,\cap&=2,\\
\delta_1\,\text{¡}&=0,&\delta_2\,\text{¡}&=1,&\delta_1\,\bangup&=1,&\delta_2\,\bangup&=0.
\end{align*}
Here $X_\pm$ represent positive/negative (classical) crossings,  $\cup,\cap$ are cup/cap, and $\text{¡}\maps 0\to 1$, $\bangup\maps 1\to 0$ are endpoint maps (head/leg in knotoid terminology). 

\begin{definition}[Unoriented  tangloid category]\label{def:utc}
  Let $\Omega(\beta) =(P(\beta^*), 0, \otimes_0, ({}_a\#)_{a\in \N}, (\#_b)_{b\in \N})$. The \emph{unoriented  tangloid category},  denoted $UTC$, is the strict monoidal category formally presented by
\[
\FF\Big(\Omega(\beta)\big/\, \overline{\underline T}\Big),
\]
where $\overline{\underline T}$ is the  $\tfrac{1}{2}$--monoidal closure of the congruence $\underline{T}$ generated by the local relations $[\mathrm{T}_i]$ below and their left/right insertions (the composition of morphisms follows the composition of functions convention): 

In $\hom_{P(\beta^*)}(0,1)$, we have the only relation:

 \smallbreak
    \begin{itemize}
    \item {$[T_{12}]:\cap \circ (\mathrm{id}_1\otimes !)\sim_{T_{0,1}}\, \text{¡}\sim_{T_{0,1}}\,\cap\circ (!\otimes \mathrm{id}_1) $}
    \end{itemize}
    
\bigbreak
\noindent     In $\hom_{P(\beta^*)}(1,0)$, we have the only relation:

 \smallbreak
    \begin{itemize}
    \item {$ [T_{13}]: (\mathrm{id}_1\otimes \text{¡}) \circ \cup\sim_{T_{1,0}}\,\,  !\sim_{T_{1,0}} (\text{¡} \otimes \mathrm{id}_1)\circ\cup $}
     \end{itemize}
    
\bigbreak
\noindent In $\hom_{P(\beta^*)}(1,1)$, we have the only relations:

 \smallbreak
\begin{itemize}
 \item {$[T_1]:(\cap\otimes \mathrm{id}_1)\circ(\mathrm{id}_1\otimes \cup) \sim_{T_{1,1}} \mathrm{id}_1\sim_{T_{1,1}} (\mathrm{id}_1 \otimes \cap)\circ(\cup\otimes \mathrm{id}_1)$}
    \smallbreak
    \item $[T_{9}]: \text{¡}\, \otimes\, ! \sim_{T_{1,1}} \, ! \circ \text{¡} \sim_{T_{1,1}} !\, \otimes \,\text{¡}$
    \end{itemize}
\bigbreak
\noindent In $\hom_{P(\beta^*)}(0,2)$, we have the only relations:
\begin{itemize}
    \item $[T_2]: (\cap)\circ X_- \sim_{T_{0,2}} \cap \sim_{T_{0,2}}  (\cap ) \circ X_+ $
\end{itemize}
   \bigbreak
\noindent In $\hom_{P(\beta^*)}(2,0)$, we have the only relations:
\begin{itemize}
    \item $[T_3]:(X_-)\circ \cup \sim_{T_{2,0}}  \cup \sim_{T_{2,0}}  (X_+)\circ\cup$
\end{itemize}
 \bigbreak
 
\noindent     In $\hom_{P(\beta^*)}(1,2)$, we have the only relation:
     \begin{itemize}
         \item $[T_{10}]: \cap\, \otimes\, ! \sim_{T_{1,2}}\,  ! \circ \cap \sim_{T_{1,2}}\,  ! \otimes \cap $
     \end{itemize}
       \bigbreak
\noindent     In $\hom_{P(\beta^*)}(2,1)$, we have the only relation:
       \begin{itemize}
           \item $[T_{11}]: \text{¡}\, \otimes \cup \sim_{T_{2,1}}\cup \circ\, \text{¡} \sim_{T_{2,1}} \cup \otimes \text{¡} $
       \end{itemize}
    \bigbreak
\noindent     In $\hom_{P(\beta^*)}(2,2)$, we have the only relations:

 \smallbreak
    \begin{itemize}
     \item {$[T_4]:X_- \circ X_+\sim_{T_{2,2}} \mathrm{id}_2\sim_{T_{2,2}} X_+ \circ X_-$}
    \end{itemize}
    \smallbreak
    \begin{itemize}
        \item $[T_{8}]: \cap \otimes \cup \sim_{T_{2,2}} \cup \circ \cap \sim_{T_{2,2}} \cup \otimes \cap$
    \end{itemize}

 \bigbreak

\noindent    In $\hom_{P(\beta^*)}(1,3)$, we have the only relations:

 \smallbreak
    \begin{itemize}

 \item $[T_{6}]:(\cap\otimes \id_1)\circ (\id_1\otimes X_+)\sim_{T_{1,3}} (\id_1\otimes \cap)\circ (X_-\otimes \id_1)$
     \smallbreak
    \item  {$[T_{7}]:(\cap \otimes \mathrm{id}_1)\circ (\mathrm{id}_1\otimes X_-) \sim_{T_{1,3}} (\mathrm{id}_1\otimes \cap)\circ (X_+\otimes \mathrm{id}_1)$}

    \end{itemize}

    \bigbreak
\noindent   In $\hom_{P(\beta^*)}(3,3)$, we have the only relations:

 \smallbreak
    \begin{itemize}
    \item {$[T_5]: (X_-\otimes \mathrm{id}_1)\circ(\mathrm{id}_1\otimes X_-)\circ (X_-\otimes \mathrm{id}_1)\sim_{T_{3,3}} (\mathrm{id}_1 \otimes X_-)\circ (X_-\otimes \mathrm{id}_1)\circ (\mathrm{id}_1 \otimes X_-)$}
    \end{itemize}
    
\end{definition}

\noindent The generators and relations above can be understood geometrically, as shown in Figures~1 and~2. Each generator is a local diagrammatic piece, and each relation says that two local pictures represent the same morphism whenever they are related by the allowed topological moves. The tensor product corresponds to horizontal juxtaposition of diagrams, whereas composition corresponds to vertical stacking.

Thus, when two compatible arcs meet under composition, their common boundary point is identified and the arcs join to form a continuous strand. We read  diagrams top-to-bottom.  In this way, these relations encode the topological concatenation and isotopy rules underlying the category.
\begin{figure}[H]
    \centering
\includegraphics[width=0.8\linewidth]{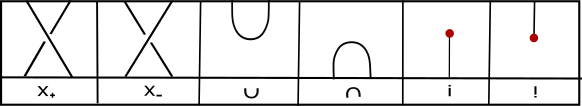}
    \caption{Geometric representations for the generators of $UTC$. }
    \label{fig:UTCg}
\end{figure}

\begin{figure}[H]
    \centering
\includegraphics[width=0.8\linewidth]{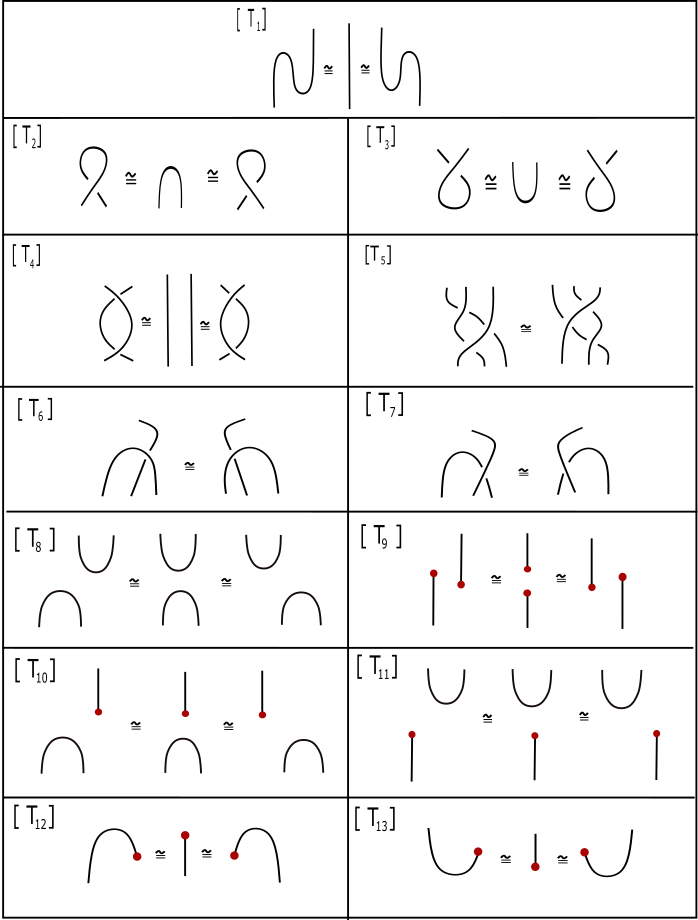}
    \caption{Diagrammatic representations for the relations of $UTC$. }
    \label{fig:generators_UTC}
\end{figure}

Note that the cup swing moves can obtained from the cap swing moves \cite{Ohtsuki2002} as in the figure below,
\[
\includegraphics[width=0.8\linewidth]{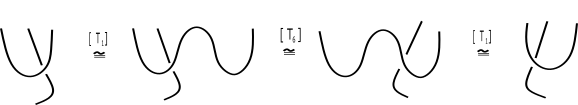}
    \label{fig:cup_swing_moves}
\]

It turns out that adding the swing moves $[T_{6}], [T_{7}]$ to our list of isotopy moves makes
redundant certain instances of Reidemeister moves involving horizontal arcs. For example, one can easily verify that a horizontal RI move can be obtained from a vertical one and two swing moves. Likewise, an RII move with two horizontal arcs can be obtained by an RII
move with two vertical arcs, two swing moves and changes of relative positions of vertices.

 Since $UTC$ is strict monoidal, we freely use the \emph{interchange law}:
for any composable morphisms $f,f',g,g'$ we have
$(f\otimes g)\circ(f'\otimes g')=(f\circ f')\otimes(g\circ g')$.
In particular, morphisms supported on disjoint tensor factors commute.  For example, assuming composable identity morphisms we have:
\begin{align}  \label{general_far_commuting}
    (f\otimes \id) \circ (\id\otimes g)  = (f \circ \id) \otimes (\id \circ  g)  = f\otimes g= (\id \circ f) \otimes (g \circ \id ) = (\id \otimes g) \circ (f \otimes \id)
\end{align}
So, thanks to the interchange law and the identity morphism, we obtain the exchange of horizontal levels in the diagrammatic counterpart. As another instance, we obtain the exchange of horizontal levels of  $\cup$,  $\cap$ or a crossing with an endpoint morphism: 
\begin{align}  \label{eq:crossing_shift}
 (X_{\pm} \otimes  \id)  \circ  (\id_2 \, \otimes \, !) =  (X_{\pm} \circ \id_2) \otimes   (\id \, \circ\, !) = (\id_2 \circ X_{\pm}) \otimes   (\, !\,) =X_{\pm}\, \otimes\, ! 
\end{align}
\begin{align}  \label{far_commuting_endpoint}
 (\cap \otimes  \id)  \circ  (\id_2 \, \otimes \, !) =  (\cap \circ \id_2) \otimes   (\id \, \circ\, !)= (\cap \circ \id_2) \otimes   (\,!\,)= \cap\, \otimes\, ! 
\end{align}
\begin{align}  \label{far_commuting_endpoint22}
(\id_2 \, \otimes \text{¡} \, ) \circ  (\cup \otimes  \id)     =  (\id_2 \circ \cup) \otimes   (\text{¡}\, \circ\, \id)=(\id_2 \circ \cup) \otimes   (\text{¡}\,)  = \cup\, \otimes\, \text{¡}
\end{align}
together with all similar versions of these relations.  Figure~\ref{fig:endpoint_shifts}, top rows, illustrates the above endpoint shifts.  All symmetric ones follow similarly. We do not list these ``far-commuting'' relations separately. 

One can also achieve from the basic relations a horizontal shift between two endpoints: 
\begin{align*}
%\label{endpoints-relation}
    !\,\circ  \cap \circ (\, \text{¡}\, \otimes \id_2 )\circ (\cup \otimes \id )&= \, !\,\circ ( \cap) \circ (\, \text{¡}\, \otimes (\id \otimes \id)  )\circ (\cup \otimes \id ) \\
    &= \, !\,\circ ( \cap) \circ ((\, \text{¡}\, \otimes \id )\otimes \id  )\circ (\cup \otimes \id ) \\
    &= \, !\,\circ ( \cap) \circ  ((\, \text{¡}\, \otimes \id )\circ \cup)\otimes (\id \circ \id)\\
    &\stackrel{[T_9]}{=} \, !\,\circ (\cap \,\circ (\,! \,\otimes  \id)) \stackrel{[T_8]}{=}  \, !\, \circ \,  \text{¡} 
\end{align*}
By similarity, 
\begin{align*}
    ! \, \circ \cap\, \circ ( \id_2 \otimes \text{¡} )\circ (\id \otimes \cup) \stackrel{[T_9,T_8]}{=} \, !\, \circ \,  \text{¡} 
\end{align*}
Hence, we obtain the combined relations  (see bottom row of Figure~\ref{fig:endpoint_shifts}):
\begin{equation}\label{endpoint_shift}
    !\,\circ  \cap \circ (\, \text{¡}\, \otimes \id_2 )\circ (\cup \otimes \id )  =  \, !\, \circ \,  \text{¡} = ! \, \circ \cap\, \circ ( \id_2 \otimes \text{¡} )\circ (\id \otimes \cup)
\end{equation}
%%%%%%%%%%%%%%%%%%%%%%%%%%%%%%%%%%%%%%%%%%%%%%%%%%%%%%%%%%%%%%% 

%%%%%%%%%%%%%%%%%%%%%%%%%%%%%%%%%%%%%%%%%%%%%%%%%%%%%%%%%%%%%%%%

\begin{figure}[H] 
    \centering
\includegraphics[width=0.6\linewidth]{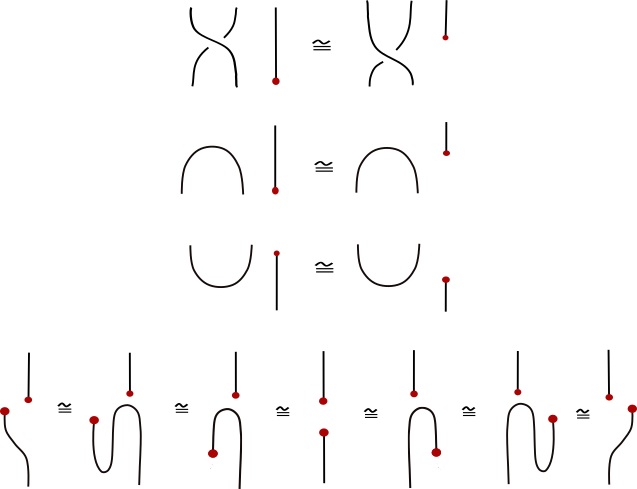}
    \caption{Geometric representations for relations (\ref{eq:crossing_shift}), (\ref{far_commuting_endpoint}), (\ref{far_commuting_endpoint22}) and (\ref{endpoint_shift}). }
    \label{fig:endpoint_shifts}
\end{figure}

\begin{remark}[Why "unoriented"]
Knotoids typically carry an orientation from leg to head. Here we work with unoriented strands to keep the presentation minimal and to isolate the algebraic effect of endpoint generators. An oriented variant can be obtained by duplicating generators and polarities.
\end{remark}

\begin{remark}
    Let $UTC^*$ denote the subcategory of $UWTC$,  obtained  by omitting the generator $X$, representing a welded crossing, together with all relations involving $X$. Recall that in  $UWTC$ one forbidden move is allowed, namely the one in which the endpoint arc  can move under any other other arc.  This move is, likewise, inherited in $UTC^*$. On the other hand,   in $UTC$  both forbidden moves are respected, namely it is forbidden to move an endpoint arc over or under other arcs. Moreover, $UTC$ includes relations $[T_8]-[T_{11}]$ reflecting horizontal and vertical shifts of generating morphisms.   So, there is an obvious surjective functor  from $UTC$ to $UTC^*$.
\end{remark}

\begin{remark}
 There are several differences between the category $UTC$ and the Morse knotoid categories that are defined independently in \cite{gugumcu2021quantum} and in \cite{moltmaker2022framed}, where the primary role is the construction of quantum invariants for planar knotoids and spherical framed knotoids respectively. In particular, the strict monoidal category framework  and the relations of endpoints with cup and cap  $[T_{12}]$ and $[T_{13}]$ give the $UTC$ more flexibilty  as we see in Figure~\ref{fig:endpoint_shifts}. Also these relations $[T_{12}]$ and $[T_{13}]$ are very natural to include, as they reflect allowed surface moves in the theory of knotoids. An example is illustrated in Figure~\ref{fig:cup_cap_relations}. The absence of these relations  in \cite{gugumcu2021quantum,moltmaker2022framed} caused them to entrain the rotation number or the oriented open-ended state component. In addition, Moltmaker’s
framed and biframed knotoid categories introduce framing and coframing structures related to the 
Reshetikhin–Turaev invariants.
\end{remark}
\begin{figure}[H]
    \centering
    \includegraphics[width=0.4\linewidth]{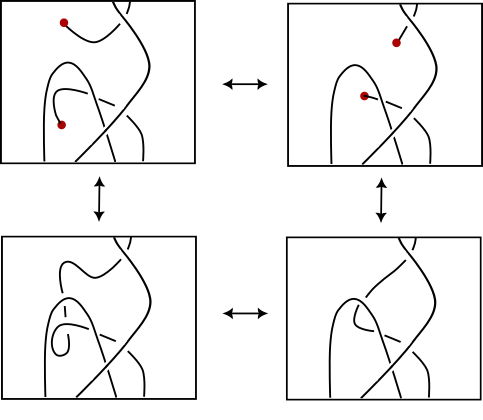}
    \caption{Justifying the relations $[T_{12}]$ and $[T_{13}]$.}
    \label{fig:cup_cap_relations}
\end{figure}

 We close this section by considering the regular and the framed analogues of $UTC$ in parallel to the works 
  \cite{gugumcu2021quantum} and  \cite{moltmaker2022framed}.

\begin{definition} \label{def:regular-framed_UTC}
We define the \emph{regular unoriented tangloid category}, denoted $rUTC$, to have the same generating morphisms as the unoriented tangloid category $UTC$, but with the relations $[T_2]$ and $[T_3]$ omitted. These relations correspond diagrammatically to the Reidemeister 1 moves, thus by omitting them we are in the regular isotopy tangle category.

Furthermore, $UTC$ gives rise to the \emph{framed unoriented tangloid category}, denoted $fUTC$,  having the same generators and relations as $UTC$, except for the relations $[T_2]$ and $[T_3]$ which are replaced by the framing preserving relations depicted in Figure~ \ref{fig:framed R1}. 
\end{definition}

\begin{figure}[H]
    \centering
\includegraphics[width=0.6\linewidth]{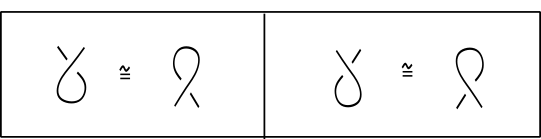}
    \caption{The framing preserving relations in $fUTC$. }
    \label{fig:framed R1}
\end{figure}

Our motivation for Definition~\ref{def:regular-framed_UTC} is that quantum invariants are often not invariant under the
first Reidemeister move, yet regular isotopy invariants can often be normalized to ambient isotopy invariants by a normalizing factor. On the other hand framed isotopy is needed in the construction of quantum invariants for framed knots.

%%%%%%%%%%%%%%%%%%%%%%%%%%%%%%%%%%%%%%%%%%%%%%%%%%%%%%%%%%%%%%%%%%%%

\section{The Extended Unoriented Tangloid Category}\label{sec:eutc}

In this section we define an extended unoriented tangloid category, denoted $\Ti$. This category  is obtained from $UTC$ by adding the empty generator $\emptyset$ to $E(\beta)$. For doing this, we first extend  $UTC$ by auxilliary empty morphisms, $\emptyset_t$ and $\emptyset_b$, acting as place-keepers at the top and bottom, for enabling to have every morphism from object $n$ to object $n$ for any $n$. Diagrammatically, the place-keeper morphisms represent  points.

\subsection{Generating data for $\Ti$ - the empty morphism }

We now fix the generating data  for the tangloid category $\Ti$ (recall Subsection~\ref{ss:mcpres}). Let
\[
E(\beta)=\{X_+,X_-, \cup, \cap,  \, \text{¡}, \bangup, \emptyset\},\]

As in $UTC$, $X_\pm$ represent positive/negative (classical) crossings,  $\cup,\cap$ are cup/cap, and $\text{¡}, \bangup$ are endpoint maps (head/leg in knotoid terminology). 
 In $\Ti$, however, we would like each generator to have the same source and target. For this reason we extend $UTC$ to $UTCe$ by including the auxilliary \textit{top and bottom  empty morphisms} $\emptyset_t \maps 1\to 0$ and $\emptyset_b \maps 0\to 1$. These are represented diagrammatically as a top/bottom black dot $\bullet$ : \includegraphics[]{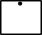} and \includegraphics[]{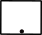}, which act as placeholders for having every generator of  $\Ti$ acting from object $2$ to object $2$, except for $\bangup$ and $\text{¡}$  that now act from object $1$ to object $1$. So, the empty morphisms appear in the representations of all generators, except for the crossings. In other words we define in $\Ti$, 
\begin{align*}
      \cup  \ \maps  \  2\to 2  :=& (\cup \in UTC)  \circ (\emptyset_b \otimes \emptyset_b) \\
      \cap \ \maps  \  2\to 2  :=& (\emptyset_t \otimes \emptyset_t) \circ  (\cap \in UTC)\\
      \text{¡} \ \maps   \ 1\to 1 :=& \emptyset_t  \circ  (\text{¡} \in UTC)\\
      \bangup \ \maps  \  1\to 1  :=& (\bangup \in UTC)  \circ  \emptyset_b
 \end{align*}
 
\noindent  In the set of generators  $E(\beta)$ of $\Ti$  we further add the  \textit{empty morphism} 
 \[
 \emptyset := \emptyset_t \circ \emptyset_b \ \maps \ 1\to 1
 \]
\noindent represented diagrammatically as two corresponding black dots. In Figure~\ref{fig:EUTC_g} the elements of $E(\beta)$ are represented geometrically. 
 Hence, the source and target objects for the generators of $\Ti$ are now extended to the following:  
\begin{alignat*}{2}
\delta_1 X_+&=\delta_2 X_+=2,\\
\delta_1 X_-&=\delta_2 X_-=2,\\
\delta_1\,\cup&=\delta_2\,\cup=2,\\
\delta_1\,\cap&=\delta_2\,\cap=2,\\
%\delta_1\,/&=\delta_2\, / = 2 ,\\
%\delta_1\,\backslash&=\delta_2\,\backslash=2,\\ 
\delta_1\,\text{¡}&=\delta_2\,\text{¡}=1,\\
\delta_1\,\bangup &=\delta_2\,\bangup=1,\\
\delta_1 \, \emptyset&=\delta_2 \,  \emptyset =1.
\end{alignat*} 

\begin{figure}[H]
    \centering
    \includegraphics[width=0.9\linewidth]{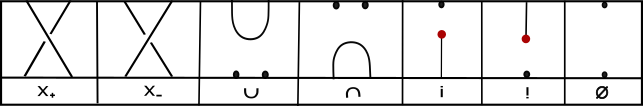}
    \caption{Geometrical presentation of the $\Ti$ generators.}
    \label{fig:EUTC_g}
\end{figure}

 %%%%%%%%%%%%%%%%%%%%%%%%%%%%%%%%%
 
 \subsection{Composition  and  tensoring rules for $\emptyset$ } \label{sec:rulesempty}

  Before providing the definition of $\Ti$  we explain the composition  and  tensoring rules especially for $\emptyset$ in $\Ti$, as they arise from  the general diagrammatic rules below. 
  
 \begin{enumerate}  
 \vspace{.25cm}
\item When two arcs become incident they join and when two  $\bullet$'s  become incident they merge into one, as for example in Figure~\ref{fig:ex2} or Figure~\ref{fig:ex1}, and vice versa.
     \vspace{.25cm}
     
 \item When two morphisms are composed, any  isolated $\bullet$ in an interior region  cancels, as for example in Figure~\ref{fig:ex2}. Conversely, an  isolated $\bullet$ can emerge.
         \vspace{.25cm}
         
 \item When two morphisms are composed, any arc that becomes incident with $\bullet$  is contracted to $\bullet$, whether it is an arc of a cup/cap, crossing,  or it contains an endpoint or not. See for example  Figures~\ref{fig:ex2} and~\ref{fig:ex3}.
     \vspace{.25cm}

 \item For any morphism $L$, the notation $L \otimes \emptyset$ means that we tensor $L$ with $\emptyset$ by placing one $\bullet$ at the bottom right of $L$ and another corresponding $\bullet$ at the top right of $L$. Analogously for the notation $\emptyset\otimes  L$.
 %    \vspace{.25cm}
\end{enumerate}

 \begin{figure}[H]
    \centering
\includegraphics[width=0.5\linewidth]{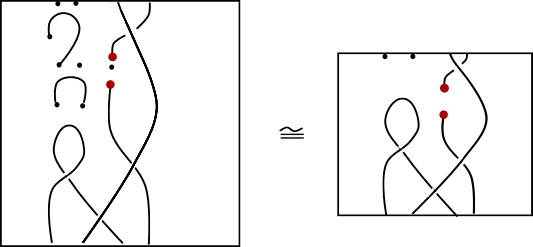}
    \caption{Contracting arcs and points}
    \label{fig:ex2}
\end{figure}

 \begin{figure}[H]
    \centering
\includegraphics[width=0.4\linewidth]{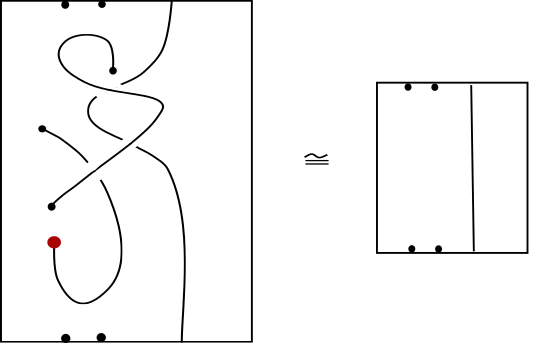}
    \caption{Contracting arcs and endpoints}
    \label{fig:ex3}
\end{figure}

\begin{notation}
As usual, for any $n\in \mathbb{N}, n>1,$ we adopt the notations $\id_n := \id_1 \otimes \ldots \otimes \id_1$ and $\emptyset_n := \emptyset \otimes \ldots \otimes \emptyset$.
\end{notation}
%{\color{red} For any morphism $L$, the notation $L \otimes \emptyset^s$ means that we tensor $L$ with $\emptyset$ by placing one $\emptyset$ at the bottom (source) of $L$. For any morphism $L$, the notation $L \otimes \emptyset^t$ means that we tensor $L$ with $\emptyset$ by placing one $\emptyset$ at the top (target) of $L$.}

 The above diagrammatic rules can be summarized in the following:  
 
 \begin{definition} \label{def:emptyset rules}
     For the $\emptyset$ in $\Ti$ we have the following {\it composition rules} and {\it tensoring rules}. 

  \vspace{.25cm} 
\begin{enumerate}
 \item $\emptyset$ composed with $\emptyset$ is contracted to $\emptyset$: $\emptyset \, \circ \emptyset = \emptyset$. Generally,  $\emptyset_n \, \circ\emptyset_n = \emptyset_n$
    \vspace{.25cm}
    
    \item $\id_1$ composed with $\emptyset$ is contracted to $\emptyset$: $\id_1 \, \circ\emptyset = \emptyset = \emptyset \, \circ \id_1$ 

    Generally,  $\id_n \, \circ\emptyset_n = \emptyset_n = \emptyset_n \, \circ \id_n$
    \vspace{.25cm}
 
      \item  
    \begin{align*}
        (\emptyset  \otimes \emptyset)\circ\,\cup  & = (\emptyset  \otimes \id_1)\,\circ \cup=(\id_1  \otimes \emptyset)\,\circ \cup = \emptyset \otimes \emptyset  \\
        \ \cup \circ\,(\emptyset \otimes \emptyset) \,  & = \cup\,\circ(\id_1\otimes \emptyset)=\cup\,\circ(\emptyset\otimes \id_1)= \cup \\
        \cup \circ \cup &= \cup
    \end{align*}
    
      \item 
      \begin{align*}
          \cap \, \circ (\emptyset  \otimes \emptyset) & =\cap \,  \circ (\emptyset  \otimes \id_1) =\cap \, \circ (\id_1  \otimes \emptyset)= \emptyset \otimes \emptyset\\
          (\emptyset \otimes \emptyset)\, \circ\cap & = (\id_1 \otimes \emptyset)\, \circ\cap =(\emptyset \otimes \id_1)\,\circ \cap= \cap\\
           \cap \circ \cap &= \cap
      \end{align*}
      
     \item  \begin{align*}
         X_{\pm} \, \circ (\emptyset  \otimes \emptyset) & = \emptyset \otimes \emptyset  = (\emptyset \otimes \emptyset) \circ \, X_{\pm}\\
        (\id_1  \otimes \emptyset)\,\circ X_{+} & = (\id_1  \otimes \emptyset)\,\circ X_{-}  \\ 
         (\id_1  \otimes \emptyset)\,\circ X_{+} & = X_{\pm} \, \circ (\emptyset  \otimes \id_1)    \\
(\emptyset \otimes \id_1)\circ \, X_{+} & = (\emptyset \otimes \id_1)\circ \, X_{-} \\ 
  (\emptyset \otimes \id_1)\circ \, X_{+} & =        
         X_{\pm} \circ(\id_1 \otimes \emptyset) 
     \end{align*}

     \item  \begin{align*}
         \text{¡}\circ \emptyset&= \emptyset\\
         \emptyset \circ\text{¡} &=\text{¡}
     \end{align*}
     
     \item \begin{align*}
         !\circ \emptyset &=\,\, !\\
         \emptyset\,\circ\, ! &= \, \emptyset
     \end{align*}
     
     \item \begin{align*}
   ( \emptyset \otimes  \text{¡} ) \circ X_+ & =  ( \emptyset \otimes  \text{¡} )\circ  X_-\\
   ( \text{¡} \otimes  \emptyset ) \circ X_+ & = ( \text{¡} \otimes  \emptyset )\circ  X_-\\
   X_+ \circ  ( \emptyset \otimes  \text{¡} ) & = (\emptyset\otimes \text{¡}) =  X_- \circ ( \emptyset \otimes  \text{¡} ) \\
  X_+ \circ ( \text{¡} \otimes  \emptyset )& = \text{¡} \otimes  \emptyset =  X_- \circ  ( \text{¡} \otimes  \emptyset )
     \end{align*}
     
    \item  \begin{align*}
   ( \emptyset \, \otimes\,  ! ) \circ X_+ &= (\emptyset \, \otimes\,!) = ( \emptyset\,  \otimes  ! )\circ  X_-\\
   ( !\otimes  \emptyset ) \circ X_+&= (! \otimes \emptyset) \,  = ( ! \otimes  \emptyset ) \circ X_-\\
   X_+ \circ ( \emptyset\,  \otimes\, ! ) & =  X_- \circ ( \emptyset\,  \otimes\,  ! ) \\
  X_+ \circ (! \otimes  \emptyset )& =  X_- \circ ( !\otimes  \emptyset )
     \end{align*}

     \item  \begin{align*}
        (\text{¡}  \otimes \emptyset)\,\circ \cup  & = \emptyset \otimes \emptyset =( \emptyset \otimes  \text{¡} )\, \circ \cup  \\
        \cup \,\circ  (!\otimes \emptyset) & = \cup = \cup \, \circ (\emptyset\,\otimes !)\\
         (!\otimes \emptyset) \circ  \cup \, & =\, ! \otimes \emptyset \\
           (\emptyset\,\otimes\, !) \circ\cup &= \emptyset\, \otimes\, !
    \end{align*}
    
    \item  \begin{align*}
       \cap \,\circ (\text{¡}  \otimes \emptyset)\,  & =  \text{¡}\otimes \emptyset \\
       \cap \,\circ  (\emptyset \otimes  \text{¡} ) &= \emptyset\, \otimes \text{¡}  \\
        (\text{¡}  \otimes \emptyset)\,\circ \cap  & = \cap =( \emptyset \otimes  \text{¡} )\, \circ\cap \\
        \cap \,\circ (!  \otimes \emptyset)\,  & = \emptyset \otimes \emptyset =\cap \circ\, (\emptyset \otimes  ! ) 
    \end{align*}

\end{enumerate}
\end{definition}

%%%%%%%%%%%%%%%%%%%%%%%%%%%%%%%%%%%%%%%%%%%%%%%%
\subsection{The diagonal elements}

 The composition of a crossing with the placeholder morphism $\emptyset$ gives rise to two special elements in $\Ti$, the \textit{diagonal morphisms} acting from object $2$ to object $2$, denoted by  $\backslash$ and $/$, as illustrated in Figure~\ref{fig:diagonal_morphisms}:
 
\[
\backslash := (\id_1 \otimes \emptyset)\circ X_+ \quad \text{and} \quad  
/  := (\emptyset \otimes \id_1)\circ X_+ 
\]

\noindent These diagonal morphisms will subsequently appear as shorthand in the defining relations involving the generators. 
 
 \begin{figure}[H]
     \centering
     \includegraphics[width=0.6\linewidth]{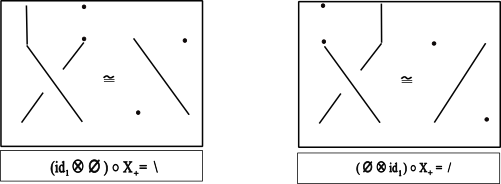}
     \caption{The diagonal morphisms $\backslash$ and $/$.}
     \label{fig:diagonal_morphisms}
 \end{figure}

 Note that, in the extension $UTCe$ of $UTC$ the diagonal elements can be obtained by tensoring the identity with $\emptyset_t$ or $\emptyset_b$ composed with cup-cap. For example,
 \[
 \backslash = (\id_1\otimes \emptyset_t)\circ (\id_1\otimes\cap)\circ (\cup \otimes \id_1)\circ (\emptyset_b\otimes \id_1)
 \]

For the diagonal elements we have the composition and tensoring rules below,  which follow from the  ones listed in Definition~\ref{def:emptyset rules}:

 \begin{enumerate}
      \item  \begin{align*}
         (/)\, \circ (\emptyset \otimes\emptyset) & = \emptyset \otimes\emptyset =  (\emptyset \otimes\emptyset)\circ \,(/)  \\
         (/)\,\circ(\emptyset \otimes \id_1)&= \emptyset \otimes\emptyset = (\id_1 \otimes \emptyset)\circ \, (/) \\
         (/)\,\circ  (\id_1 \otimes \emptyset)   & = / =(\emptyset \otimes \id_1)\circ \, (/ )
     \end{align*}
     
     \item   \begin{align*}
         (\backslash)\, \circ(\emptyset \otimes\emptyset) & = \emptyset \otimes\emptyset =(\emptyset \otimes\emptyset) \circ\,(\backslash) \\
         (\emptyset \otimes \id_1)\circ \,(\backslash) &= \emptyset \otimes\emptyset=  (\backslash)\circ (\id_1 \otimes \emptyset)\,\\
          ( \id_1 \otimes \emptyset)\,\circ (\backslash) & = \backslash = (\backslash)\circ \, (\emptyset \otimes \id_1) \,\, 
     \end{align*}

\item
\begin{align*}
     (\emptyset \otimes \text{¡}) \,\circ   (/) &=  ( \emptyset \otimes  \text{¡} ) \circ X_+   \\
   (\text{¡} \otimes \emptyset)\circ  \, (\backslash)  &= ( \text{¡} \otimes  \emptyset ) \circ X_+ 
\end{align*}
\item 
\begin{align*}
  (\backslash)\circ (\emptyset\, \otimes\, !)    &=X_+ \circ ( \emptyset\,  \otimes\, ! )  \\
 (/)\circ  (!\, \otimes  \emptyset) &= X_+ \circ (! \otimes  \emptyset )
\end{align*}

   \item \begin{align*}
 ( \text{¡} \otimes  \emptyset ) \circ\, (/) & = \emptyset\otimes \emptyset=   ( \emptyset \otimes  \text{¡} )\, \circ(\backslash) \\
 (/) \, \circ ( \text{¡} \otimes  \emptyset ) & = \text{¡}\otimes\emptyset = (\backslash)\,\circ ( \text{¡} \otimes  \emptyset )\\
  (/) \circ(\emptyset \otimes \text{¡})&= \emptyset \otimes \text{¡}= (\backslash)\,\circ (\emptyset \otimes \text{¡})
     \end{align*}
     
\item \begin{align*}
  (/)\circ(\emptyset\, \otimes\,!) & = \emptyset\, \otimes \emptyset = (\backslash)\circ(!\otimes \emptyset)\\
  (\emptyset\, \otimes\,!)\, \circ(/) & = \emptyset \,\otimes\, != (\emptyset\,\otimes\,!)\, \circ(\backslash)\\
  (!\otimes \emptyset)\circ(\backslash) & =\, !\,\otimes \emptyset= (!\otimes \emptyset)\circ(/)
\end{align*}

\item \begin{align*}
    \backslash \circ X_+ &= \backslash \circ / = \backslash \circ X_- \\
     / \circ X_+ &= / \circ \backslash = / \circ X_- \\
    X_+ \circ \backslash &= / \circ \backslash = X_- \circ \backslash\\
    X_+ \circ / &= \backslash \circ / = X_- \circ / 
\end{align*}
 \end{enumerate}

\begin{figure}[H]
    \centering
    \includegraphics[width=0.8\linewidth]{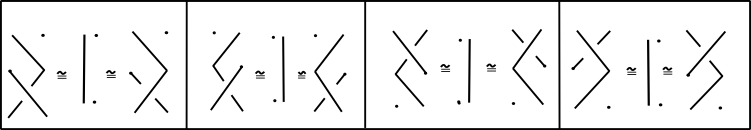}
    \caption{Composing diagonal with crossing makes an arc tensored with $\emptyset$.}
    \label{fig:diagonal_and_crossing}
\end{figure}
 \vspace{.25cm}

 %%%%%%%%%%%%%%%%%%%%%%%%%%%%%%%%%%%%%%%%%%%%%%%%

\begin{examples}
Here  are some examples of morphisms in $\Ti$. 
 \vspace{.25cm}
   
\begin{enumerate}
    \item $ (\cap \otimes X_+)\circ( / \otimes\, ! \otimes \id_1)\circ ( \cap \otimes \text{¡} \otimes \id_1)\circ( \cap \otimes X_- )\circ( X_- \otimes \id_2) \circ(\id_1 \otimes X_- \otimes \id_1) $, see Figure~\ref{fig:ex2}.
     \vspace{.25cm}
   
    \item  $ (\cap \otimes \id_1) \circ(\backslash \otimes \id_1) \circ( \id_1 \otimes X_+ )\circ(\id_1\otimes X_-)\circ(X_-\otimes \id_1)\circ( \text{¡} \otimes \id_2) \circ( \cup \otimes \id_1)$, see Figure~\ref{fig:ex3}.  
      \vspace{.25cm}
      
    \item  $(\id_1 \otimes/ \otimes \id_1 ) \circ(/ \otimes / ) \circ( \id_1 \otimes/ \otimes \id_1)$, see Figure~\ref{fig:ex1}.  This example can easily generalize to constructing morphisms depicted by any number of adjacent parallel straight arcs of any slope. See Figure~\ref{fig:ex1.2}. Such elements appear in relations $[T_{4}]-[T_{5}]'$ below.
\end{enumerate}

 \begin{figure}[H]
    \centering
\includegraphics[width=0.7\linewidth]{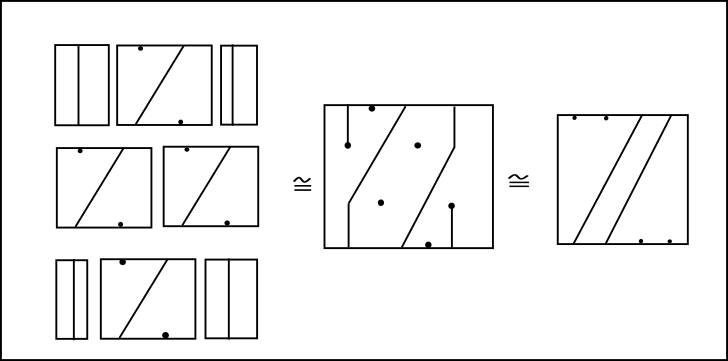}
    \caption{Creation of consecutive diagonals.}
    \label{fig:ex1}
\end{figure}
\begin{figure}[H]
    \centering
\includegraphics[width=0.6\linewidth]{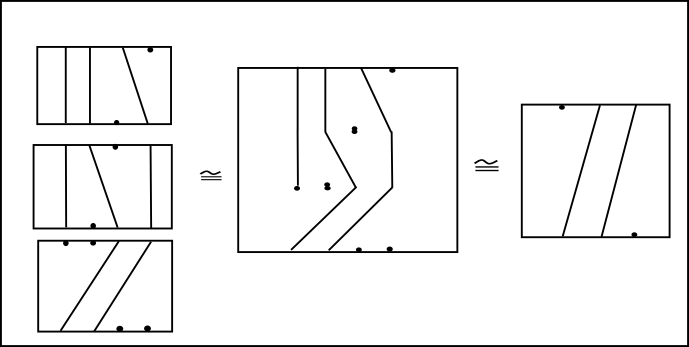}
    \caption{Creation of consecutive diagonals.}
    \label{fig:ex1.2}
\end{figure}

\end{examples}

 \subsection{Defining the extended unoriented  tangloid category $\Ti$ }
 
We are now ready to define the extended unoriented  tangloid category.

\begin{definition}[Extended unoriented  tangloid category]\label{def:eutc}
  Let $\Omega(\beta)$ be the free $\tfrac{1}{2}$--monoidal completion of the free path category $P(\beta^*)$ as defined in $UTC$. The \emph{extended unoriented  tangloid category},  denoted by $\Ti$, is the strict monoidal category formally presented by
\[
\FF\Big(\Omega(\beta)\big/\, \overline{\underline T}\Big),
\]
where $\overline{\underline T}$ is the $\tfrac{1}{2}$--monoidal congruence generated by  the {\it local  relations}  $[\mathrm{T}_i],[\mathrm{T}_i]'$ listed below with all their variants and derivations, along with the composition rules for $\emptyset$ in Definition~\ref{def:emptyset rules}. 

 \bigbreak

\begin{itemize}
    \item $[T_1]: (\cap)\circ X_- \sim \cap \sim (\cap )\circ X_+ $
\end{itemize}
     \vspace{.25cm}
     \begin{itemize}
         \item $[T_1]':(X_-)\circ \cup \sim \cup \sim (X_+) \circ \cup$
     \end{itemize}
     \vspace{.25cm}
    \begin{itemize}
    \item $[T_2]:(\cap\otimes \mathrm{id}_1)\circ(\mathrm{id}_1\otimes \cup) \sim (\emptyset \otimes /) \circ( / \otimes \emptyset)$
      \end{itemize}
   \vspace{.25cm}
   
    \begin{itemize}
  \item   $[T_2]': (\backslash \otimes\emptyset )\circ( \emptyset \otimes \backslash )\sim (\mathrm{id}_1 \otimes \cap)\circ(\cup\otimes \mathrm{id}_1)$
     \end{itemize}
     \vspace{.25cm}
     
    \begin{itemize}
    \item $[T_3]: (\backslash)\circ(/)\sim \id_1 \otimes \emptyset $
      \end{itemize}
   \vspace{.25cm}
   
    \begin{itemize}
    \item $[T_3]': \emptyset \otimes \id_1 \sim(/)\circ(\backslash)$
    \end{itemize}
   \vspace{.25cm}

\begin{itemize}
   \item $[T_{4}]: (\id \otimes\cap)\circ\Big( (\id_2\otimes \backslash )\circ(\id_1\otimes\backslash \otimes \id_1)\circ \big  ( ( \id_1\otimes/ \otimes \id_1)\circ(  /\otimes /  )\circ(\id_1 \otimes/\otimes \id_1) \big)\Big)\sim \cap \otimes  \emptyset$
      \end{itemize}
   \vspace{.25cm}
   
      \begin{itemize}
          \item $[T_{4}]': \emptyset_1\otimes \cap  \sim \,
  (\cap\otimes \id_1) \circ\Big ( (/ \otimes \id_2 )\circ(\id_1 \otimes/ \otimes \id_1 ) \circ \big ( ( \id_1\otimes \backslash \otimes \id_1)\circ(  \backslash\otimes \backslash  )\circ (\id_1 \otimes \backslash\otimes \id_1) \big) \Big)$
      \end{itemize}

   \vspace{.25cm}
\begin{itemize}
   \item $[T_{5}]: \Big( (\id_2\otimes \backslash )\circ(\id_1\otimes\backslash \otimes \id_1)\circ \big ( ( \id_1\otimes/ \otimes \id_1)\circ(  /\otimes /  )\circ(\id_1 \otimes/\otimes \id_1) \big)\Big) \circ(\cup  \otimes \emptyset_1) \sim \emptyset \otimes  \cup$
      \end{itemize}
         \vspace{.25cm}
      \begin{itemize}
          \item $ [T_{5}]': \cup\otimes \emptyset \sim \,
  \Big ( (/ \otimes \id_2 )\circ(\id_1 \otimes/ \otimes \id_1 )\circ \big ( ( \id_1\otimes \backslash \otimes \id_1)\circ(  \backslash\otimes \backslash  )\circ(\id_1 \otimes \backslash\otimes \id_1) \big) \Big)\circ(\emptyset \otimes \cup)  $
      \end{itemize}
   \vspace{.25cm}

    \begin{itemize}
     \item {$[T_6]:X_-\circ X_+\sim \mathrm{id}_2\sim X_+ \circ X_-$}
    \end{itemize}
    
   \vspace{.25cm}
   
    \begin{itemize}
    \item {$[T_7]: (X_-\otimes \mathrm{id}_1)\circ(\mathrm{id}_1\otimes X_-)\circ (X_-\otimes \mathrm{id}_1)\sim (\mathrm{id}_1 \otimes X_-)\circ (X_-\otimes \mathrm{id}_1)\circ (\mathrm{id}_1 \otimes X_-)$}
    \end{itemize}
   \vspace{.25cm}
\begin{itemize}
    \item  {$[T_{8}]:( \emptyset \otimes \backslash)\circ (\cap \otimes \mathrm{id}_1)\circ (\mathrm{id}_1\otimes X_+) \sim  ( /\otimes \emptyset)\circ (\mathrm{id}_1\otimes \cap)\circ (X_-\otimes \mathrm{id}_1)$}
      \end{itemize}
     \vspace{.25cm}

     \begin{itemize}
       \item $[T_{8}]': ( \emptyset \otimes \backslash )\circ(\cap\otimes \id_1)\circ(\id_1\otimes X_-)\sim ( / \otimes \emptyset ) \circ (\id_1\otimes \cap)\circ (X_+\otimes \id_1)$
    \end{itemize}
 \vspace{.25cm}

   \begin{itemize}
       \item $[T_{9}] : (\id_1 \otimes X_- )\circ (\cup \otimes \id_1 ) \circ ( \id_1 \otimes / ) \sim ( X_+ \otimes \id_1) \circ (\id_1 \otimes \cup) \circ ( \backslash \otimes \id_1)$
   \end{itemize}
    
   \vspace{.25cm}

\begin{itemize}
       \item $[T_{9}]' : (\id_1 \otimes X_+ )\circ (\cup \otimes \id_1 ) \circ ( \id_1 \otimes / ) \sim ( X_- \otimes \id_1) \circ (\id_1 \otimes \cup) \circ ( \backslash \otimes \id_1)$
   \end{itemize}
    
   \vspace{.25cm}

   \begin{itemize}
   \item $[T_{10}]: (\backslash \otimes \id_1)\circ ( \id_1 \otimes X_+) \circ ( / \otimes\id_1) \sim ( \id_1 \otimes / ) \circ ( X_+ \otimes \id_1 )\circ ( \id_1 \otimes \backslash)$

   \end{itemize}
 \vspace{.25cm}
   \begin{itemize}
   \item $[T_{11}]: (\backslash \otimes \id_1)\circ ( \id_1 \otimes X_-) \circ ( / \otimes\id_1) \sim ( \id_1 \otimes / ) \circ ( X_- \otimes \id_1 )\circ ( \id_1 \otimes \backslash)$

   \end{itemize}
%\begin{itemize}
 %   \item $[T_{10}]: (\backslash)\circ ( X_+) \sim \id_1 \otimes \emptyset \sim (\backslash)\circ( X_-)$
%\end{itemize}

 %  \vspace{.25cm}
%\begin{itemize}
%    \item $[T_{10}]': (/)\circ ( X_-) \sim \emptyset \otimes \id_1  \sim (/)\circ ( X_+)$
%\end{itemize}

   %\vspace{.25cm}
%\begin{itemize}
 %   \item $[T_{11}]:( X_+)\circ (\backslash) \sim  \emptyset \otimes\id_1  \sim ( X_-)\circ (\backslash)$
%\end{itemize}

%   \vspace{.25cm}
%\begin{itemize}
%    \item $[T_{11}]': ( X_+)\circ (/) \sim  \id_1  \otimes \emptyset \sim ( X_-) \circ (/)$
%\end{itemize}
\vspace{.25cm}
\begin{itemize}
 \item  $[T_{12}]:  \cap\, \otimes\, \cup\sim \, \cup\circ\, \cap \sim \,\cup \otimes\, \cap$
    \end{itemize}
     \vspace{.25cm}
\begin{itemize}
 \item  $[T_{13}]:  \text{¡}\, \otimes\, !\sim \, !\circ\, \text{¡} \sim \,! \otimes\, \text{¡}$
    \end{itemize}

 \vspace{.25cm}

    \begin{itemize}
 \item  $[T_{14}]:  \cap\, \otimes\, !\sim \, (\emptyset \,\otimes\, !)\circ\, \cap \sim \, (! \,\otimes\, \emptyset)\circ\, \cap  \sim \,! \otimes\, \cap$
    \end{itemize}
    \vspace{.25cm}
     \begin{itemize}
 \item  $[T_{14}]':  \text{¡}\, \otimes\, \cup\sim \, \cup \circ (\emptyset \,\otimes\, \text{¡}) \sim \, \cup \circ (\text{¡}\,\otimes\, \emptyset)\circ\, \cup  \sim \, \cup  \otimes\, \text{¡}$
    \end{itemize}

   \vspace{.25cm}
    \begin{itemize}
    \item {$[T_{15}]:\cap \circ (\mathrm{id}_1\otimes !)\sim\, \text{¡} \otimes \emptyset  $}
    \end{itemize}

    \vspace{.25cm}
 \begin{itemize}
     \item $[T_{15}]': \emptyset \otimes\,\text{¡}  \sim\,\cap \circ (!\otimes \mathrm{id}_1)$
 \end{itemize}

    \vspace{.25cm}
    \begin{itemize}
    \item {$ [T_{16}]: (\mathrm{id}_1\otimes \text{¡}) \circ\cup\sim\,\,  ! \otimes \emptyset$}
\end{itemize}

        \vspace{.25cm}
          \begin{itemize}
     \item  $[T_{16}]':  \emptyset\otimes\, !\sim (\text{¡} \otimes \mathrm{id}_1)\circ\cup $
\end{itemize}

        \vspace{.25cm}
          \begin{itemize}
       \item {$[T_{17}]: (\emptyset \otimes\text{¡})\circ(/)\sim\, \text{¡} \otimes \emptyset  $}
\end{itemize}

          \vspace{.25cm}
            \begin{itemize}
       \item $[T_{17}]': \emptyset \otimes \, \text{¡} \sim\,(\text{¡} \otimes \emptyset)\circ (\backslash )$
\end{itemize}
 \vspace{.25cm}
 
         \begin{itemize}
 \item {$ [T_{18}]: (\backslash)\circ(\emptyset\,\otimes\, !) \sim\,\,  !\,\otimes\, \emptyset   $}
   \end{itemize}
   \vspace{.25cm}

   \begin{itemize}
 \item  $[T_{18}]':  \emptyset\, \otimes\, !\sim \, (/)\circ(! \otimes \emptyset)$
    \end{itemize}
    \vspace{.25cm}

\end{definition}
% \bigbreak

\noindent The above local relations can be presented diagrammatically  as exemplified in Figure~\ref{fig:placeholder}  (recall that we read the diagram from top to bottom), where the composition  and tensoring rules also apply.

\begin{figure}[H]
    \centering
    \includegraphics[width=0.8\linewidth]{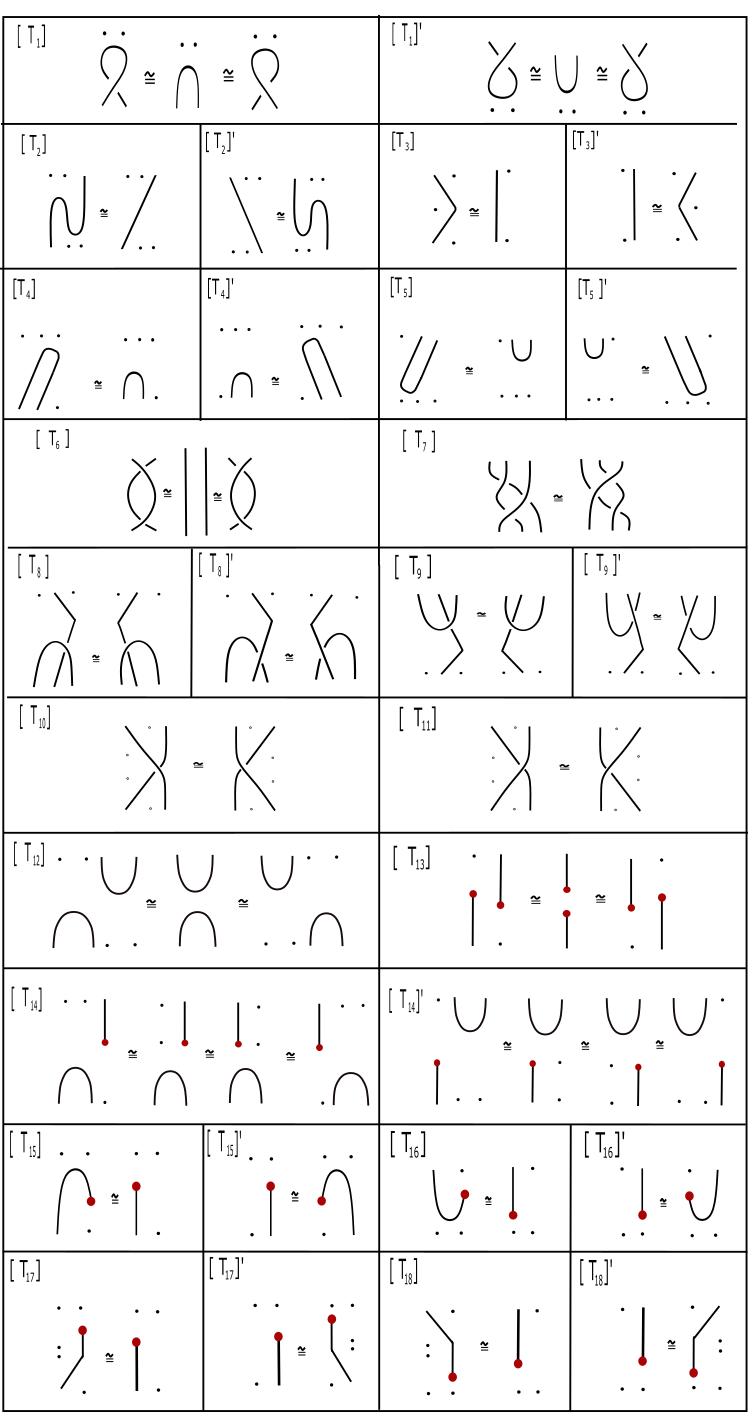}
    \caption{Local  relations in $\Ti$.}
    \label{fig:placeholder}
\end{figure}

As in $UTC$, it turns out that having the swing moves $[T_8]-[T_9]^\prime$ in our list of isotopy moves makes 
redundant certain instances of Reidemeister moves involving horizontal arcs. But in $\Ti$ we may need to compose with $/,\backslash$ in order to be able to apply the swing moves, as for example in the case of a horizontal RI move that can be obtained from a vertical one and two swing moves. 

Also, as in $UTC$,  since $\Ti$ is strict monoidal, we freely use the \emph{interchange law}:
for any composable morphisms $f,f',g,g'$ we have
$(f\otimes g)\circ(f'\otimes g')=(f\circ f')\otimes(g\circ g')$.
 In particular,  morphisms supported on disjoint tensor factors commute. 
  For example, assuming composable identity morphisms we have:
\begin{align}  \label{general_far_commuting2}
    (f\otimes \id) \circ (\id\otimes g)  = (f \circ \id) \otimes (\id \circ  g)  = f\otimes g= (\id \circ f) \otimes (g \circ \id ) = (\id \otimes g) \circ (f \otimes \id)
\end{align} 
\begin{figure}[H]
    \centering
    \includegraphics[width=0.35\linewidth]{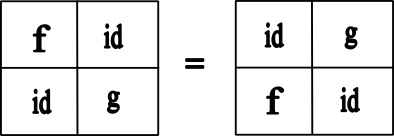}
    %\caption{Caption}
    \label{fig:}
\end{figure}
We do not list these ``far-commuting'' relations separately. 

\begin{remark}
Thanks to the interchange law and the identity morphism, we obtain the exchange of horizontal levels in the diagrammatic context. For instance, we obtain the exchange of horizontal levels of  $\cup$,  $\cap$ or a crossing with an endpoint morphism (see Figure~\ref{fig:far-commute-re}): 
\begin{align}  \label{far_commuting_endpoint_ex}
 (\cap \otimes  \id)  \circ  (\id_2 \, \otimes \, !) =  (\cap \circ \id_2) \otimes   (\id \, \circ\, !)= (\cap \circ \id_2) \otimes   (\,!\,)= \cap\, \otimes\, ! 
\end{align}
\begin{align}  \label{far_commuting_endpoint_ex2}
(\id_2 \, \otimes \text{¡} \, ) \circ  (\cup \otimes  \id)     =  (\id_2 \circ \cup) \otimes   (\text{¡}\, \circ\, \id)=(\id_2 \circ \cup) \otimes   (\text{¡}\,)  = \cup\, \otimes\, \text{¡}
\end{align}
\begin{align}  \label{}
 (X_{\pm} \otimes  \id)  \circ  (\id_2 \, \otimes \, !) =  (X_{\pm} \circ \id_2) \otimes   (\id \, \circ\, !) = (X_{\pm} \circ \id_2) \otimes   (\, !\,) =X_{\pm}\, \otimes\, ! 
\end{align}

%\begin{align}
 %   !\, (\emptyset_2\,\otimes\, !) \circ (\emptyset\otimes \cap) \circ (\, \text{¡}\, \otimes \id_2 )\circ (\cup \otimes \id )&= \, (\emptyset_2 \, \otimes\,!)\,\circ (\emptyset\, \otimes \cap) \circ (\, \text{¡}\, \otimes (\id \otimes \id)  )\circ (\cup \otimes \id ) \\
%    &= \, (\emptyset_2 \, \otimes\,!)\,\circ (\emptyset\, \otimes \cap) \circ ((\, \text{¡}\, \otimes \id )\otimes \id  )\circ (\cup \otimes \id ) \\
  %  &= \, (\emptyset_2 \, \otimes\,!)\,\circ (\emptyset\, \otimes \cap) \circ  ((\, \text{¡}\, \otimes \id )\circ \cup)\otimes (\id \circ \id)\\
  %  &= \, (\emptyset_2 \, \otimes\,!)\,\circ (\emptyset\, \otimes \cap) \,\circ ( (\emptyset\, \otimes \,! )\,\otimes  \id)) \\
  %  &= \, (\emptyset_2 \, \otimes\,!)\,\circ (\emptyset\, \otimes \cap) \,\circ  (\emptyset\, \otimes (\,! \,\otimes  \id))\\
 %   &= \, (\emptyset_2 \, \otimes\,!)\,\circ ((\emptyset\, \circ \emptyset)\, \otimes (\cap\, \circ (!\otimes \id)))\\
  %  &= \, (\emptyset_2 \, \otimes\,!)\,\circ (\emptyset\,\otimes (\emptyset\,\otimes \text{¡}))\\
  %  &= \, (\emptyset_2 \, \otimes\,!)\,\circ (\emptyset_2\,\otimes \,\text{¡})  = \emptyset_2 \, \otimes\,( !\, \circ \,  \text{¡} \,)
%\end{align}

%\begin{align}
 %   (! \otimes\, \emptyset_2)\, \circ ( \cap\,\otimes\emptyset) \circ ( \id_2 \otimes \text{¡} )\circ (\id \otimes \cup) =\, (!\, \circ \,  \text{¡}) \otimes\, \emptyset_2
%\end{align}

\noindent Note  that Equation (\ref{endpoint_shift}) in $UTC$ about horizontal shifts of endpoints also holds in $\Ti$ by virtue of the more elementary relations $[T_{17}]$--$[T_{18}]'$ in $\Ti$.
\end{remark}

\begin{figure}[H]
    \centering
\includegraphics[width=0.42\linewidth]{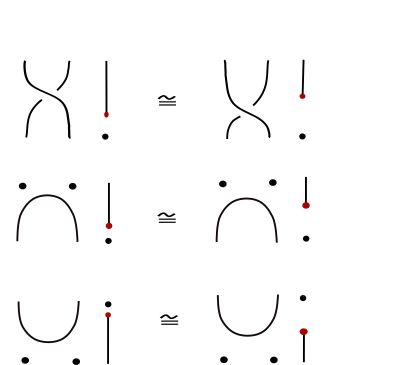}
    \caption{Geometric representations for the vertical shifts of endpoints. }
    \label{fig:far-commute-re}
\end{figure}

\begin{remark}\label{re:shifting of diagonal}
    The vertical shifting of diagonals, showing in Figure~\ref{fig:shifting_diagonal}, is obvious because: 
    \[ \backslash \circ \id_2= \id_2 \circ \backslash\]
     \[ / \circ \id_2= \id_2 \circ /\]
\end{remark}
\begin{figure}[H]
    \centering
\includegraphics[width=0.4\linewidth]{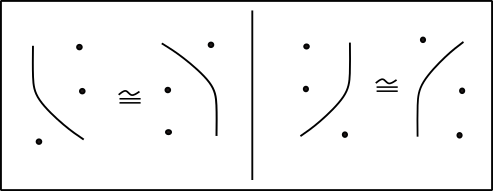}
    \caption{Vertical shifting of diagonals.}
    \label{fig:shifting_diagonal}
\end{figure}

\begin{lemma} \label{crossing_hshift}
The symmetric planar isotopy relations,  which enable horizontal shifts  of crossings with four or more ends can be derived from the basic   $[T_{10}], [T_{11}]$ $\Ti$  relations and the vertical shifting of diagonals. The relations with four ends is depicted in Figure~\ref{fig:PIR_of_crossing} and reads as follows:
\begin{align*}
    (\backslash\otimes \id_2 ) \circ (\id_1 \otimes \backslash \otimes \id_1 ) \circ (\id_2 \otimes X_{\pm})&\circ ( \id_1 \otimes / \otimes \id_1 ) \circ (/ \otimes \id_2 ) \\&=  (\backslash \otimes / ) \circ (\id_1 \otimes X_\pm \otimes \id_1) \circ (/ \otimes \backslash)\\&=  ( \id_2 \otimes / ) \circ  ( \id_1 \otimes / \otimes \id_1) \circ ( X_\pm \otimes \id_2 )\circ ( \id_1 \otimes \backslash \otimes \id_1) \circ (\id_2 \otimes \backslash)
\end{align*}
\begin{figure}[H]
    \centering
    \includegraphics[width=0.45\linewidth]{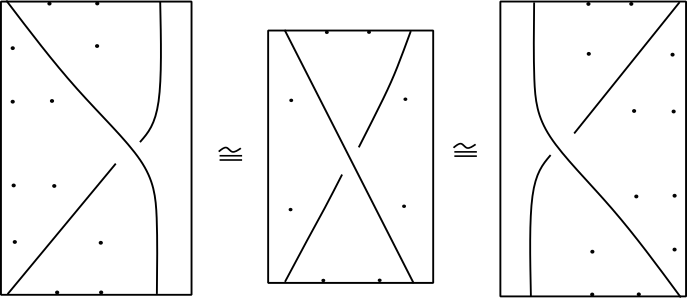}
    \caption{The horizontal shiftings of a crossing.}
    \label{fig:PIR_of_crossing}
\end{figure}
\end{lemma}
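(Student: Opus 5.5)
The plan is to prove the two equalities separately: first that the left-hand expression equals the middle one, and then that the middle one equals the right-hand expression. Each equality should reduce to a single application of $[T_{10}]$ (for $X_+$) or $[T_{11}]$ (for $X_-$), tensored with an identity strand. The remaining diagonals are moved into place using the interchange law (\ref{general_far_commuting2}) and the vertical shifting of diagonals in Remark~\ref{re:shifting of diagonal}. Since $[T_{10}]$ and $[T_{11}]$ differ only in the sign of the crossing, I treat $X_+$ and read off $X_-$ verbatim.

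\textbf{First equality.} In the left expression, the factors $(\backslash\otimes\id_2)$ at the top and $(/\otimes\id_2)$ at the bottom live on strands $1,2$. The inner three factors $(\id_1\otimes\backslash\otimes\id_1)\circ(\id_2\otimes X_+)\circ(\id_1\otimes/\otimes\id_1)$ live on strands $2,3,4$. That inner block is exactly $\id_1\otimes\big((\backslash\otimes\id_1)\circ(\id_1\otimes X_+)\circ(/\otimes\id_1)\big)$. Applying $[T_{10}]$ inside this tensor factor (legitimate since $\overline{\underline T}$ is a $\tfrac12$-monoidal congruence, closed under left/right insertions) replaces it by
\[
\id_1\otimes\big((\id_1\otimes/)\circ(X_+\otimes\id_1)\circ(\id_1\otimes\backslash)\big)=(\id_2\otimes/)\circ(\id_1\otimes X_+\otimes\id_1)\circ(\id_2\otimes\backslash).
\]
The whole expression then becomes $(\backslash\otimes\id_2)\circ(\id_2\otimes/)\circ(\id_1\otimes X_+\otimes\id_1)\circ(\id_2\otimes\backslash)\circ(/\otimes\id_2)$. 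Now $(\backslash\otimes\id_2)\circ(\id_2\otimes/)=\backslash\otimes/$ and $(\id_2\otimes\backslash)\circ(/\otimes\id_2)=/\otimes\backslash$ by the interchange law, using $\backslash\circ\id_2=\id_2\circ\backslash$ from Remark~\ref{re:shifting of diagonal}. This yields the middle expression.

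\textbf{Second equality.} This is the mirror argument. Split $\backslash\otimes/=(\id_2\otimes/)\circ(\backslash\otimes\id_2)$ and $/\otimes\backslash=(/\otimes\id_2)\circ(\id_2\otimes\backslash)$, again by interchange. This isolates the block $(\backslash\otimes\id_1)\circ(\id_1\otimes X_+)\circ(/\otimes\id_1)$ on strands $1,2,3$ (tensored with $\id_1$ on the right). Applying $[T_{10}]$ to it gives
\[
\big((\id_1\otimes/)\circ(X_+\otimes\id_1)\circ(\id_1\otimes\backslash)\big)\otimes\id_1=(\id_1\otimes/\otimes\id_1)\circ(X_+\otimes\id_2)\circ(\id_1\otimes\backslash\otimes\id_1).
\]
What remains is $(\id_2\otimes/)$ on top and $(\id_2\otimes\backslash)$ on the bottom, which is exactly the right-hand expression.

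For crossings with more ends (more parallel strands on either side), I would argue by induction on the number of extra strands. Each step applies the four-end case inside an $\id_k\otimes-\otimes\id_l$ insertion and regroups diagonals by interchange, building longer staircases of diagonals as in Figure~\ref{fig:ex1.2}.

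\textbf{Main obstacle.} The main difficulty is bookkeeping rather than any single hard step. One must check that the interchange-law regroupings are legitimate in the presence of $\emptyset$: the diagonals are derived morphisms $2\to2$ containing placeholder dots, so their sources and targets must match and no $\emptyset$ composition rule should fire unexpectedly. I would check this by writing each diagonal as a genuine $2\to2$ morphism and using only (\ref{general_far_commuting2}) with identities. That way no contraction rule from Definition~\ref{def:emptyset rules} is ever invoked.
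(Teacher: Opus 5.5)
Your proposal is correct and essentially the same as the paper's proof. In each equality the outer diagonal layers are regrouped via the interchange law and the vertical shifting of diagonals, which isolates a copy of the $[T_{10}]$ configuration tensored with $\id_1$ on one side, and that relation is applied once; the differences are cosmetic, since the paper derives both outer expressions starting from the middle one and cites $[T_{10}]$ for both signs, whereas you correctly invoke $[T_{11}]$ for $X_-$.
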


\begin{proof} We will prove the validity of the relations with four ends. The general case follows by an obvious iteration.
\begin{align*}
    (\backslash \otimes / ) \circ (\id_1 \otimes X_\pm \otimes \id_1) \circ (/ \otimes \backslash) &= (\backslash \otimes / ) \circ (\id_2 \otimes \id_2) \circ  (\id_1 \otimes X_\pm \otimes \id_1) \circ (\id_2 \otimes \id_2) \circ  (/ \otimes \backslash)\\
     &\stackrel{interchange\, law}{=}  (\backslash \circ  \id_2 ) \otimes (/  \circ  \id_2) \circ  (\id_1 \otimes X_\pm \otimes \id_1) \circ (\id_2 \circ / ) \otimes  (\id_2  \circ  \backslash)\\&\stackrel{Remark~\ref{re:shifting of diagonal}}{=} (\backslash \circ  \id_2 ) \otimes (\id_2  \circ  /) \circ  (\id_1 \otimes X_\pm \otimes \id_1) \circ (\id_2 \circ / ) \otimes  (\backslash \circ  \id_2)\\
     &\stackrel{interchange\, law}{=} (\backslash \otimes  \id_2 ) \circ (\id_2  \otimes  /) \circ  (\id_1 \otimes X_\pm \otimes \id_1) \circ (\id_2 \otimes \backslash ) \circ  (/  \otimes  \id_2)\\
     &\stackrel{[T_{10}]}{=} (\backslash \otimes  \id_2 ) \circ (\id_1  \otimes  \backslash \otimes \id_1) \circ  (\id_2 \otimes X_\pm ) \circ (\id_1 \otimes / \otimes \id_1 ) \circ  (/  \otimes  \id_2)\\
    \end{align*}

    \begin{align*}
    (\backslash \otimes / ) \circ (\id_1 \otimes X_\pm \otimes \id_1) \circ (/ \otimes \backslash) &= (\backslash \otimes / ) \circ (\id_2 \otimes \id_2) \circ  (\id_1 \otimes X_\pm \otimes \id_1) \circ (\id_2 \otimes \id_2) \circ  (/ \otimes \backslash)\\
     &\stackrel{interchange\, law}{=}  (\backslash \circ  \id_2 ) \otimes (/  \circ  \id_2) \circ  (\id_1 \otimes X_\pm \otimes \id_1) \circ (\id_2 \circ / ) \otimes  (\id_2  \circ  \backslash)\\&\stackrel{Remark~\ref{re:shifting of diagonal}}{=} (\id_2 \circ  \backslash ) \otimes (/  \circ  \id_2) \circ  (\id_1 \otimes X_\pm \otimes \id_1) \circ (/ \circ \id_2 ) \otimes  (\id_2 \circ  \backslash)\\
     &\stackrel{interchange\, law}{=} (\id_2 \otimes  / ) \circ (\backslash  \otimes  \id_2) \circ  (\id_1 \otimes X_\pm \otimes \id_1) \circ (/ \otimes \id_2 ) \circ  (\id_2 \otimes  \backslash)\\
     &\stackrel{[T_{10}]}{=} (\id_2 \otimes  / ) \circ (\id_1  \otimes  / \otimes \id_1) \circ  ( X_\pm \otimes \id_2 ) \circ (\id_1 \otimes \backslash \otimes \id_1 ) \circ  (\id_2 \otimes \backslash  )\\
    \end{align*}
\end{proof}

We conclude this section with the following definition in analogy to  Definition~\ref{def:regular-framed_UTC} for $UTC$. 

\begin{definition}\label{def:regular-framed_Tangi}
 We define the  \emph{regular extended unoriented tangloid category}, denoted $\rTi$, to have the same generators and relations as $\Ti$, except for the relations $[T_1]$ and $[T_1]'$ which are omitted. 
 Furthermore, $\Ti$ gives rise to the \emph{framed extended unoriented tangloid category}, denoted $\fTi$, which has the same generators and relations as $\Ti$, except for the relations $[T_1]$ and $[T_1]'$ which are replaced by the framing preserving relations depicted in Figure~\ref{fig:framed-ex R1}. 
\end{definition}

\begin{figure}[H]
    \centering
\includegraphics[width=0.6\linewidth]{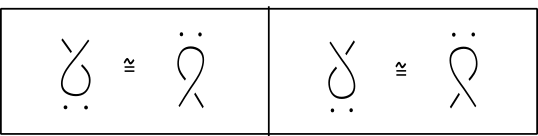}
    \caption{The framing preserving relations in $\fTi$. }
    \label{fig:framed-ex R1}
\end{figure}

%%%%%%%%%%%%%%%%%%%%%%%%%%%%%%%%%%%%%%%%%%%%%%%%%%%%%%%%%%%%%%%%%%%%

\section{The Subcategory of Braidoids}\label{sec:subcategory_braidoids}

In this subsection, we explain how the notion of  braidoid, introduced by
Gügümcü and Lambropoulou \cite{gugumcu2021braidoids}, can be interpreted inside the unoriented tangloid category $UTC$ and the extended unoriented
tangloid category $\Ti$ as a special class of morphisms  forming
a natural diagrammatic subcategory.

A \emph{braidoid diagram} is a braid-like diagram drawn in the square
$I \times I \subset \mathbb{R}^2$, where $I=[0,1]$ and the vertical direction is in \cite{gugumcu2021braidoids} oriented downward. It consists of a finite number of arcs, called strands, with
only finitely many transversal double points, each equipped with over/under
crossing data. As in classical braid diagrams, every strand is oriented
downward and has no local maxima or minima; equivalently, each horizontal line
intersects a strand at most once. The main difference from classical braid
diagrams is that a braidoid diagram may contain free strands ending at two
distinguished endpoints, called the \emph{leg} and the \emph{head}. The leg is
the initial endpoint of a free strand, while the head is the terminal endpoint.
The endpoints lying on the top and bottom boundary lines are called
\emph{braidoid ends}. These braidoid ends occur in corresponding pairs, one on
the top boundary and one on the bottom boundary. Thus, a braidoid diagram
generalizes a classical braid diagram by allowing two free strands with two special internal
endpoints. 

The isotopy classes of such diagrams under the allowed braidoid
moves, are called \emph{braidoids}.
 The allowed moves for braidoid diagrams are the downward-oriented Reidemeister
moves $\Omega_2$ and $\Omega_3$, planar $\Delta$-moves respecting the monotonicity, and special moves of the
endpoints called vertical moves and swing moves, which allow them to move within their diagrammatic region. However, as in knotoid theory,
there are also forbidden moves: the leg or the head is not allowed to pass over
or under another strand. These forbidden moves are essential, because allowing
them would trivialize the braiding information carried by the free strands.

\smallbreak
Braidoids can be realized inside  $UTC$ and $\Ti$ by restricting to those
morphisms which are braid-like. More precisely, 

\begin{definition} \label{def:braidoid category}
    The {\it braidoid category} is defined as the subcategory $\mathbf{Brd}$ of  $UTC$  with generating set $\{X_+,X_-, !, \text{¡}\}$  and relations the set of relations of $UTC$ from which are excluded all relations involving the generators $\cup, \cap$.   
\end{definition}

\begin{definition} \label{def:extended braidoid category}
The {\it extended braidoid category} is defined as the subcategory
\[
\mathbf{Brd}_{\Ti} \subset\Ti
\]
 with generating set $\{X_+,X_-,!, \text{¡}, \emptyset\}$  and relations the set of relations of $\Ti$ from which are excluded all relations involving the generators $\cup, \cap$. 
\end{definition}
By Definitions~\ref{def:braidoid category} and~\ref{def:extended braidoid category}, we have the inclusion functors 
\[
\iota:\mathbf{Brd}\hookrightarrow UTC \quad  \mbox{and}  \quad \iota:\mathbf{Brd}_{\Ti}\hookrightarrow \Ti.
\]
The objects in $\mathbf{Brd}$ and $\mathbf{Brd}_{\Ti}$ are natural numbers, interpreted geometrically as the number of
boundary positions. The morphisms are those morphisms of $UTC$ resp. $\Ti$ which
can be represented by diagrams satisfying the definition of braidoid.
%\begin{enumerate}
 %   \item all strands are monotone with respect to the downward/upward direction;
  %  \item the diagram has no local maxima or minima;
   % \item the diagram contains exactly one leg and one head;
   % \item the  boundary endpoints occur in corresponding top-bottom
   % pairs;
  %  \item the endpoint-forbidden moves are not imposed as relations.
% \end{enumerate} 
Under this interpretation, the elementary pieces of a braidoid diagram are sent to the corresponding  generators of $\mathbf{Brd}$ resp. $\mathbf{Brd}_{\Ti}$  in the following way: the positive and negative braid crossings are represented by the crossing generators $X_+$ and $X_-$:
\[
\sigma_i^+ \longmapsto
\id_{i-1} \otimes X_+ \otimes
\id_{n-i-1}
\]
\[
\sigma_i^- \longmapsto
\id_{i-1} \otimes X_- \otimes
\id_{n-i-1}
\]
 The head and leg of a braidoid are
represented by the endpoint generators
$\text{\text{¡}}$ and $!$ respectively:
\[
h_i\longmapsto  \id_{i-1}\otimes\, ! \otimes
\id_{n-i}
\]
\[
l_i\longmapsto  \id_{i-1}\otimes\, \text{¡} \otimes
\id_{n-i}
\]
The empty generator $\emptyset$ of $\Ti$ 
plays the role of an implicit point or placeholder in the diagram  allowing for the definition of a set of generators in the set of braidoids on $n$ strands under  multiplication operation the concatenation (vertical stacking) of diagrams. 
It follows that the morphisms (including the generators) in $\mathbf{Brd}_{\Ti}$ are in $\Hom(n,n)$, $n\in \N$. 

The diagonal elements $/$ and $\backslash$ shall be encoded as: 
\[
d_i\longmapsto  \id_{i-1}\otimes\, / \otimes
\id_{n-i-1}
\]
\[
\bar{d_i}\longmapsto  \id_{i-1}\otimes\, \backslash \otimes
\id_{n-i-1}
\]

Vertical stacking of braidoid diagrams corresponds to categorical
composition in $\Ti$, while horizontal juxtaposition corresponds to
the tensor product. Therefore, the assignment from braidoid diagrams to $UTC$-morphisms or 
$\Ti$-morphisms preserves both composition and tensor product. 
Furthermore, the relations in $\mathbf{Brd}$ and  $\mathbf{Brd}_{\Ti}$  correspond precisely to the isotopy relations in the set of braidoid diagrams. 
 Hence, the categories $\mathbf{Brd}$ and  $\mathbf{Brd}_{\Ti}$ of braidoids may be regarded as the diagrammatic
subcategories of $UTC$ and  $\Ti$ respectively, which furnish the natural categorical frameworks for the class of braidoids, and whose morphisms are monotone
tangloid diagrams. In this sense, the extended
tangloid category provides a larger ambient category, while braidoids are
obtained by imposing the additional global condition that the diagrams are
braid-like and by keeping the endpoint-forbidden moves excluded.

%%%%%%%%%%%%%%%%%%%%%%%%%%%%%%
\begin{remark}
The term `implicit point' was introduced in \cite{gugumcu2017knotoids,Gugumcu2017} as an auxiliary
combinatorial structure in an attempt  to express braidoids in terms of basic generating blocks. In \cite{Gugumcu2017} a systematic effort is made for listing basic relations among generating blocks of braidoids with operation the vertical stacking.  
Otherwise,  the empty generators and the diagonal generators $/$ and $\backslash$ were already introduced in the definition of the Motzkin algebra in \cite{Benkart2011Motzkin}.
\end{remark}
%%%%%%%%%%%%%%%%%%%%%%%%%%%%%%5
\begin{remark}
The categories $UTC$ and $\Ti$ contain, as natural subcategories, the categories
of knotoids, multi-knotoids, which are immersions of the unit interval and a number of circles in an oriented surface, and linkoids, immersions of some copies of the unit interval  in an oriented surface.  Thus, the tangloid framework
provides new categorical models for these diagrammatic theories.
\end{remark}

%%%%%%%%%%%%%%%%%%%%%%%%%%%%%%%%%%%%%%%%%%%%%%%%%%%%%%%%%%%%%%%%%%%%%%%%%

\section{The Tangloid Algebra} \label{sec:tangloid-algebra}

The aim of this section is to define a tangloid algebra related to the extended unoriented tangloid category $\Ti$ of Definition~\ref{def:eutc}. Our construction is analogous in spirit to other diagrammatic settings where categorical morphisms give rise to natural endomorphism algebras; compare, for instance, the Brauer category framework in~\cite{lehrer2015brauer}. 

%We denote by $R \, \Ti$  the $R$–linear tangloid category, for  a ring $R$.

\begin{definition}[Tangloid algebra $\mathcal{T}_n$]\label{def:tangloid-algebra}
Let $\Ti$ be the  extended tangloid category of Definition~\ref{def:eutc}.  For $n\ge 0$ the \emph{tangloid algebra of degree $n$} is defined as the
$\mathbb{C}$–algebra
\[
  \mathcal{T}_n \, := \,\C \, \mathrm{End}_{{\Ti}}(n)\,\
\]
where \[\mathrm{End}_{{\Ti}}(n) := \Hom_\Ti(n,n) \]

\noindent The factor $\mathbb C$ indicates the linearization of the endomorphism set $\End_{\Ti}(n)$. 
\end{definition} 

  Equivalently we may state: 

\begin{proposition}\label{prop:TA-diagram-realization}
For each $n\ge 0$, the algebra $\mathcal{T}_n=\C\,\End_{\Ti}(n)$ is naturally identified with the unital diagram algebra over $\mathbb{C}$  whose
\textit{linear basis} as a $\mathbb C$-vector space consists of all equivalence classes of tangloid diagrams in a rectangle with $n$ ordered boundary points on the top and $n$ on the bottom, some of which may merely act as placeholders, modulo the local relations of $\Ti$ in  Definition~\ref{def:eutc}.  Under this identification, the algebra multiplication uses the same rule as the composition rule for $\Ti$, induced by vertical stacking  of diagrams (from top to bottom), addition is formal summation, 
and the unit, $\id_n$, is the identity diagram on $n$ vertical strands.
\end{proposition}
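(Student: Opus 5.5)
This is essentially the general fact that the linearisation of the endomorphism monoid of an object in a category is a unital associative algebra, made concrete by the diagrammatic description of $\Ti$. I would first recall that, for any category $\CC$ and object $n$, the set $\End_\CC(n)=\Hom_\CC(n,n)$ is a monoid under $\circ$ with identity $\id_n$. Then $\C\,\End_\CC(n)$, the free $\C$-vector space on this set with multiplication extended bilinearly from $\circ$, is a unital associative $\C$-algebra: associativity and unitality are checked on basis elements, where they are the category axioms. Applied to $\Ti$ and the object $n$, this already gives $\mathcal{T}_n$ the structure of a unital algebra. By construction the elements of $\End_{\Ti}(n)$ are a $\C$-basis, so what remains is to identify these basis elements, and their product, with equivalence classes of diagrams.

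Next I would unwind the presentation $\Ti=\FF\big(\Omega(\beta)/\overline{\underline T}\big)$. Morphisms of the free strict monoidal category $\Omega(\beta^*)$ are finite composites of whiskered generators ${}_a\#_b(f)$, with $f\in E(\beta)=\{X_\pm,\cup,\cap,\text{¡},\bangup,\emptyset\}$. Read geometrically, each such word is a tangloid diagram in a rectangle, obtained by stacking horizontal layers, each consisting of one generator flanked by vertical strands. By Figure~\ref{fig:EUTC_g}, each generator has equal source and target, so every layer on $n$ has $n$ top and $n$ bottom boundary positions. Here the $\bullet$'s coming from $\cup,\cap,\text{¡},\bangup,\emptyset$ are exactly the boundary points acting as placeholders. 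This gives a surjection from words to diagrams in an $n$-box. Conversely, any such diagram can be put in general position with respect to the height function, so that it is sliced into elementary layers and recovered as such a word. Two words give the same morphism of $\Ti$ precisely when they are related by the congruence $\overline{\underline T}$. This congruence is generated by:
\begin{itemize}
\item the local relations $[T_i],[T_i]'$;
\item the $\emptyset$-rules of Definition~\ref{def:emptyset rules};
\item the interchange law imposed by the slidealisation functor $\FF$.
\end{itemize}
These are exactly the moves allowed on diagrams, namely local moves together with height exchanges of disjoint pieces. Hence $\End_{\Ti}(n)$ is in bijection with equivalence classes of tangloid diagrams in the $n$-box modulo the local relations of Definition~\ref{def:eutc}.

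Finally I would check compatibility of the operations. Composition $\circ$ of words is concatenation, which corresponds to vertical stacking of diagrams read top to bottom. Any contractions forced by placeholders are precisely the composition rules of Subsection~\ref{sec:rulesempty}, which are part of the relations, so stacking is well defined on classes. The identity $\id_n=\id_1^{\otimes n}$ is the diagram of $n$ vertical strands. Linear extension then identifies $\mathcal{T}_n$ with the described diagram algebra, with formal sums as addition.

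The main obstacle is the second step: showing that the equivalence generated by $\overline{\underline T}$ on words coincides with the diagrammatic equivalence. One must verify that every planar rearrangement needed to slice a diagram differently is accounted for. For this I would invoke the interchange law (\ref{general_far_commuting2}), the vertical shifting of diagonals in Remark~\ref{re:shifting of diagonal}, and the horizontal crossing shifts of Lemma~\ref{crossing_hshift}, together with swings $[T_8]$--$[T_9]'$ and $[T_{12}]$--$[T_{18}]'$ for cups, caps and endpoints. Since the statement only claims an identification with diagrams \emph{modulo the relations of $\Ti$}, I would adopt that quotient as the definition of diagram equivalence. The identification is then tautological up to this bookkeeping, and I would not attempt a full Reidemeister-type completeness theorem.
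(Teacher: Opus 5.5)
Your proposal is correct and follows essentially the same route as the paper. Both arguments unwind $\mathcal{T}_n=\C\,\End_{\Ti}(n)$ by taking equivalence modulo the defining relations of $\Ti$ as the meaning of diagram equivalence, with multiplication given by composition (vertical stacking) and unit $\id_n$. You additionally spell out what the paper leaves implicit: associativity and unitality from the category axioms, and words in $\Omega(\beta^*)$ read as layered diagrams.

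One point deserves a precise statement. Your slicing step recovers a word only for diagrams with exactly $n$ positions (strands or placeholders) at every height, because every generator of $\Ti$ is an endomorphism. This is the class of diagrams the statement implicitly intends. For example, $\End_{\Ti}(0)=\{\id_0\}$, and a circle beside a vertical strand is not an element of $\End_{\Ti}(1)$.
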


\begin{proof}
By definition, $\End_{\Ti}(n)=\Hom_{\Ti}(n,n)$ is the set of morphisms from $n$ to $n$ in the extended  tangloid  category $\Ti$. 
Recall that, in the diagrammatic calculus underlying  $\Ti$, cups, caps, endpoint maps and
diagonal connectors  create ``empty'' boundary positions (no arc emanating from that position). Since $\Ti$ is presented diagrammatically, its morphisms are represented by tangloid diagrams modulo the defining local relations, which include the composition rules for one or more empty positions.  By definition $\mathcal{T}_n$ adopts all local relations of $\Ti$. 
Hence $\Hom_{\Ti}(n,n)$ is precisely the set of $n\to n$ tangloid diagrams modulo these relations and is closed under composition. 
The multiplication in $\mathcal{T}_n$ is the composition in $\Ti$, which is realized
diagrammatically by vertical stacking  (from top to bottom). The unit is the identity morphism,
$\id_n$, represented by $n$ vertical strands. Passing to the $\mathbb C$-linear span $\mathbb C\,\End_{\Ti}(n)$ allows horizontal composition in the form of finite complex linear combinations
of diagrams, giving the claimed vector space. 
\end{proof}

The diagrams in $\mathcal{T}_n$ are built from crossings, cups, caps, endpoint segments and empty positions. In $\mathcal{T}_n$ we use the notations $x, \bar x, u,a,h,l,, \phi$ instead of $X_+,X_-,\cup,\cap, , \text{¡}, \bangup, \emptyset$ respectively, undergoing vertical composition.  More precisely, in $\mathcal{T}_n$ we distinguish the following  elements, depicted in Figure \ref{fig:g_TA}:
\smallbreak
\begin{itemize}
  \item for $1\le i\le n-1$, let $x_i$ (resp.\  $\bar x_i$) denote  the
        positive (resp.  negative) crossings,  between the $i$-th and $(i{+}1)$-st
        strands;
        \bigbreak
  \item for $1\le i\le n-1$, let $u_i$ (resp.\ $a_i$) denote  the elementary 
  cup (resp. cap) diagram connecting the top (resp. bottom) $i$-th and $(i{+}1)$-st boundary points (and all other strands vertical);
        \bigbreak

  \item for $1\le i\le n$, let $h_i$ (resp.\ $l_i$) denote the diagram
        obtained by inserting the endpoint map $\text{¡}$
        (resp.\ $!$ ) at the $i$-th position, with all other
        strands vertical.
        \bigbreak
        \item for $1\le i\le n$, let $\phi_i$ denote the diagram obtained by inserting the empty set map $(\emptyset)$  at the $i$-th position, with all other strands vertical.
\end{itemize} 

\smallbreak
\noindent The elements $x_i,\bar x_i, u_i,a_i, h_i, l_i, \phi_i$ generate $\mathcal{T}_n$ as
a $\mathbb{C}$–algebra. All generators except $x_i,\bar x_i$  create boundary empty positions. The need for including $\phi_i$ in the generators' set arises from the fact that the tensoring operation in $\Ti$ is not available in $\mathcal{T}_n$, as it does not preserve the degree $n$. 

\begin{figure}[H]
    \centering
    \includegraphics[width=.9\linewidth]{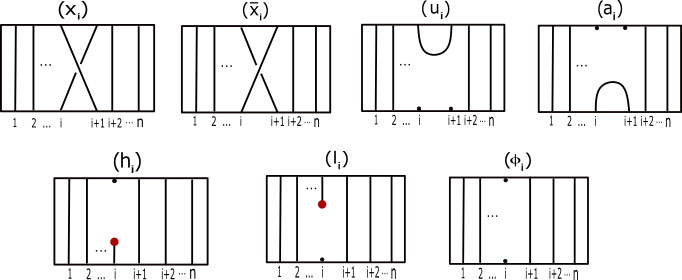}
    \caption{Geometric representatives of the tangloid algebra generators.}
    \label{fig:g_TA}
\end{figure}

\textbf{Note}, as in $\Ti$, we use the following morphisms \begin{itemize}
     \item for $1\le i\le n-1$, let $d_i$ denote the 
        diagonal connector diagram which joins $(i,\mathrm{bottom})$ to
        $(i{+}1,\mathrm{top})$  and all
        other strands vertical;
        \bigbreak
         \item for $1\le i\le n-1$, let $\bar d_i$ denote the 
        diagonal connector diagram which joins $(i{+}1,\mathrm{bottom})$ to
        $(i,\mathrm{top})$   and all
        other strands vertical.
          \bigbreak
\end{itemize}

Although these diagonal morphisms are not generators, they can be obtained by composing a crossing with the placeholder morphism. We introduce the notation $d_i$ and $\bar d_i$ for these morphisms and use it to simplify the defining relations. See illustration in Figure~\ref{fig:diagonal_morphisms_TA}. 
\begin{figure}[H]
    \centering
    \includegraphics[width=0.42\linewidth]{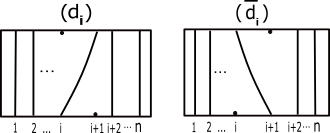}
    \caption{Diagonal morphisms in the tangloid algebra.}
    \label{fig:diagonal_morphisms_TA}
\end{figure}
Definition~\ref{def:emptyset rules} implies the following composition rules involving the generators $\phi_i$.

 \subsection{Composition rules involving the generators $\phi_i$} \label{ss:composition rules empty}
 
\begin{enumerate}
\item $\phi_i$ composed with $\phi_i$ is contracted to $\phi_i$: $\phi_i \, \circ \phi_i = \phi_i$.

    \item \label{item:1}$\id_n$ composed with $\phi_i$ is contracted to $\phi_i$:\begin{align*}
        \id_n \, \circ\phi_i = \phi_i = \phi_i \, \circ \id_n
    \end{align*} 
 
      \item  \label{item:2}
    \begin{align*}
        ( \phi_i  \circ   \phi_{i+1})\circ\,u_i  & =  \phi_i\,\circ u_i= \phi_{i+1}\,\circ u_i = \phi_i  \circ   \phi_{i+1}  \\
        \ u_i \circ\,(\phi_i  \circ   \phi_{i+1}) \,  & = u_i\,\circ\phi_i  =u_i\,\circ    \phi_{i+1}=u_i \\
        u_i\circ u_i&=u_i
    \end{align*}
    
      \item \label{item:3}
      \begin{align*}
          a_i \, \circ (\phi_i  \circ   \phi_{i+1}) & =a_i  \,  \circ \phi_i  =a_i  \, \circ (\phi_{i+1})= \phi_i  \circ   \phi_{i+1} \\
          (\phi_i  \circ   \phi_{i+1}) \, \circ a_i  & = \phi_i\, \circ a_i  =\phi_{i+1}\,\circ a_i = a_i \\
          a_i\circ a_i &=a_i
      \end{align*}
      
     \item \label{item:4} \begin{align*}
         x_i \, \circ (\phi_i  \circ   \phi_{i+1}) & = (\phi_i  \circ   \phi_{i+1})  = (\phi_i  \circ   \phi_{i+1}) \circ \, x_i\\
         \bar x_i \, \circ (\phi_i  \circ   \phi_{i+1}) & = (\phi_i  \circ   \phi_{i+1})  = (\phi_i  \circ   \phi_{i+1}) \circ \, \bar x_i\\
         \phi_{i+1}\circ x_i &= \phi_{i+1} \circ \bar x_i\\
         \phi_{i+1}\circ x_i &=x_i\circ \phi_i\\
         \phi_{i+1}\circ x_i &= \bar x_i\circ \phi_i\\
         \phi_{i}\circ \, x_i &= \phi_{i}\circ \, \bar x_i\\
         \phi_{i}\circ \, x_i &=x_i \circ\phi_{i+1}\\
          \phi_{i}\circ \, x_i &= \bar x_i \circ\phi_{i+1}
     \end{align*}

     \item \label{item:8} \begin{align*}
         h_i\circ\phi_i&= \phi_i\\
         \phi_i\circ h_i &=h_i
     \end{align*}
     
     \item \label{item:9} \begin{align*}\label{eq:9}
         l_i\circ \phi_i&=\,\, l_i\\
         \phi_i\circ  l_i &= \, \phi_i
     \end{align*}
     
     \item \label{item:10}\begin{align*}
   ( \phi_i \circ   h_{i+1} ) \circ x_i &=  ( \phi_i \circ   h_{i+1} )\circ  \bar x_i \\
   ( h_i \circ   \phi_{i+1} ) \circ x_i & =   ( h_i \circ   \phi_{i+1} ) \circ  \bar x_i\\
   x_i \circ  ( \phi_i \circ h_{i+1} ) &= ( \phi_i \circ h_{i+1} )=  \bar x_i \circ ( \phi_i \circ h_{i+1} ) \\
  x_i \circ ( h_i \circ   \phi_{i+1} )& =  h_i \circ   \phi_{i+1}  =  \bar x_i \circ ( h_i \circ   \phi_{i+1} )
     \end{align*}
     
    \item  \label{item:11}\begin{align*}
   ( \phi_i \, \circ   l_{i+1} ) \circ x_i &= ( \phi_i \, \circ   l_{i+1} )  = ( \phi_i \, \circ   l_{i+1} ) \circ  \bar x_i \\
   ( l_i \circ  \phi_{i+1} ) \circ x_i&=  ( l_i \circ  \phi_{i+1} ) \,  =  ( l_i \circ  \phi_{i+1} ) \circ \bar x_i\\
   x_i \circ ( \phi_i \, \circ   l_{i+1} )  &  =  \bar x_i \circ ( \phi_i \, \circ   l_{i+1} ) \\
  x_i \circ  ( l_i \circ  \phi_{i+1} )& =  \bar x_i \circ  ( l_i \circ  \phi_{i+1} )
     \end{align*}

     \item \label{item:12} \begin{align*}
        ( h_i \circ   \phi_{i+1} )\,\circ u_i  & = (\phi_i  \circ   \phi_{i+1})=( \phi_i
    \circ   h_{i+1} ) \circ u_i  \\
        u_i \,\circ   ( l_i \circ  \phi_{i+1} ) & = u_i = u_i\circ ( \phi_i \, \circ   l_{i+1} )\\
          ( l_i \circ  \phi_{i+1} ) \circ  u_i \, & =\,  l_i \circ  \phi_{i+1} \\
           ( \phi_i \, \circ   l_{i+1} ) \circ u_i &= \phi_i \, \circ   l_{i+1} 
    \end{align*}
    
    \item  \label{item:13}\begin{align*}
       a_i \,\circ ( h_i \circ   \phi_{i+1} ) \,  & =  ( h_i \circ   \phi_{i+1} ) \\
       a_i \,\circ  ( \phi_i \circ h_{i+1} ) &= ( \phi_i \circ h_{i+1} ) \\
        ( h_i \circ   \phi_{i+1} ) \,\circ a_i  & = a_i =( \phi_i \circ h_{i+1} )\, \circ a_i \\
        a_i \,\circ ( l_i \circ  \phi_{i+1} )\,  & = \phi_i  \circ   \phi_{i+1} =a_i \circ\, (\phi_i \, \circ   l_{i+1} ) 
    \end{align*}

\end{enumerate}

\subsection{The diagonal elements}

For the diagonal elements we have the composition rules below,  which follow from the  ones listed in Subsection~\ref{ss:composition rules empty}.

\begin{enumerate}
\item \label{item:6} \begin{align*}
        d_i\, \circ (\phi_i  \circ   \phi_{i+1})  & = (\phi_i  \circ   \phi_{i+1})  =  (\phi_i  \circ   \phi_{i+1}) \circ \,d_i \\
         d_i\,\circ\phi_i&= (\phi_i  \circ   \phi_{i+1})  = \phi_{i+1}\circ \, d_i \\
         d_i\,\circ  \phi_{i+1}   & = d_i =\phi_i\circ \, d_i
     \end{align*}
     
     \item \label{item:7}  \begin{align*}
         \bar d_i\, \circ(\phi_i  \circ   \phi_{i+1}) & = \phi_i  \circ   \phi_{i+1} =(\phi_i  \circ   \phi_{i+1}) \circ\,\bar d_i \\
         \phi_i \circ \,\bar d_i &= \phi_i  \circ   \phi_{i+1}=  \bar d_i\circ \phi_{i+1}\,\\
          \phi_{i+1}\,\circ \bar d_i & = \bar d_i = \bar d_i\circ \, \phi_i  \,\, 
     \end{align*}
 
\item \begin{align*}
    ( \phi_i \circ   h_{i+1} ) \,\circ  d_i &= ( \phi_i \circ   h_{i+1} )\circ   x_i \\
  ( h_i \circ   \phi_{i+1} ) \circ  \, \bar d_i &= ( h_i \circ   \phi_{i+1} ) \circ   x_i
\end{align*}

    \item 
    \begin{align*}
       \bar d_i\circ ( \phi_i \, \circ   l_{i+1} )  &= x_i \circ ( \phi_i \, \circ   l_{i+1} )   \\
  d_i \circ  ( l_i \circ  \phi_{i+1} )  &=x_i \circ  ( l_i \circ  \phi_{i+1} )
\end{align*}
\item \label{item:14}\begin{align*}
  ( h_i \circ   \phi_{i+1} ) \circ\, d_i & = \phi_i  \circ   \phi_{i+1}=   ( \phi_i \circ h_{i+1} )\, \circ \bar d_i \\
 d_i \, \circ  ( h_i \circ   \phi_{i+1} )& =   h_i \circ   \phi_{i+1}  = \bar d_i\,\circ  ( h_i \circ   \phi_{i+1} )\\
  d_i \circ ( \phi_i \circ h_{i+1} )&= ( \phi_i \circ h_{i+1} )= \bar d_i\,\circ ( \phi_i \circ h_{i+1} )
     \end{align*}
     
\item \label{item:15}\begin{align*}
  d_i\circ(\phi_i \, \circ   l_{i+1} )  & = (\phi_i  \circ   \phi_{i+1}) = \bar d_i\circ( l_i \circ  \phi_{i+1} )\\
  (\phi_i \, \circ   l_{i+1} ) \, \circ d_i & = (\phi_i \, \circ   l_{i+1} ) = (\phi_i \, \circ   l_{i+1} ) \, \circ \bar d_i\\
  ( l_i \circ  \phi_{i+1} )\circ \bar d_i & =  l_i \circ  \phi_{i+1} =( l_i \circ  \phi_{i+1} )\circ d_i
\end{align*}

\item \label{item:16}\begin{align*}
    \bar d_i \circ x_i &= \bar d_i\circ d_i = \bar d_i \circ \bar x_i \\
     d_i \circ x_i &= d_i \circ \bar d_i = d_i \circ \bar x_i \\
    x_i \circ \bar d_i &= d_i \circ \bar d_i = \bar x_i \circ \bar d_i\\
    x_i \circ d_i &= \bar d_i \circ d_i = \bar x_i \circ d_i 
\end{align*}
\end{enumerate}

  The relations among the above generating elements are exactly those induced
by the defining relations of $\Ti$. Hence, all relations in $\mathcal{T}_n$ are understood as relations of
\emph{local} diagrams inside a fixed $n$--box, with unused positions carrying
vertical identities. See relations $(\mathcal{T}^1)-(\mathcal{T}^{18'})$ below. Moreover, a parameter
$\delta\in\mathbb C$ appears in the algebra $\delta \mathcal{T}_n$, Definition~\ref{def:reduced-tangloid-algebra}, when  a closed circle is formed,  when composing $a_iu_i$, or when  the trivial knotoid is formed by the composition $h_il_i$. See relations $(\mathcal{T}^{19})-(\mathcal{T}^{20})$ below. More precisely, the relations in $\mathcal{T}_n$ are:

\begin{align}
a_i \circ \bar x_i &= a_i = a_i\circ x_i , && 1\le i\le n-2 \tag{$\mathcal{T}^1$} \\[2mm]
 \bar x_i\circ  u_i &= u_i =  x_i\circ  u_i , && 1\le i\le n-2 \tag{$\mathcal{T}^{1'}$} \\[2mm]
a_i \circ u_{i+1}&=d_{i+1} \circ d_i , && 1\le i\le n-2 \tag{$\mathcal{T}^2$} \\[2mm]
a_{i+1}\circ u_{i}&=\bar d_{i}\circ \bar d_{i+1} , && 1\le i\le n-2\tag{$\mathcal{T}^{2'}$} \\[2mm]
\bar d_i  \circ d_i&=\phi_{i+1}, && 1\le i\le n-1\tag{$\mathcal{T}^3$} \\[2mm]
d_i \circ \bar d_i&=\phi_i,\tag{$\mathcal{T}^{3'}$} && 1\le i\le n-1 \\[2mm]
a_{i-1} &= a_i \circ (\bar d_{i+1} \circ \bar d_i \circ d_i \circ d_{i-1} \circ d_{i+1} \circ d_i ),&& 1\le i\le n-1\tag{$\mathcal{T}^4$} \\[2mm]
a_{i+1} &= a_i \circ ( d_{i-1} \circ  d_i \circ  \bar d_i \circ  \bar d_{i-1} \circ \bar d_{i+1}\circ  \bar d_i), && 1\le i\le n-1\tag{$\mathcal{T}^{4'}$} \\[2mm]
u_{i} &= (\bar d_{i+1} \circ \bar d_i \circ d_i \circ d_{i-1} \circ d_{i+1} \circ d_i ) \circ u_{i-1}&& 1\le i\le n-1\tag{$\mathcal{T}^5$} \\[2mm]
u_i &=  ( d_{i-1}\circ  d_i \circ  \bar d_i  \circ \bar d_{i-1} \circ \bar d_{i+1} \circ \bar d_i ) \circ u_{i+1}&& 1\le i\le n-1\tag{$\mathcal{T}^{5'}$} \\[2mm]
x_i\circ  \bar x_i &= \id_n = \bar x_i \circ  x_i, && 1\le i\le n-1\tag{$\mathcal{T}^6$} \\[2mm]
\bar x_i\circ  \bar x_{i+1} \circ \bar x_i &= \bar x_{i+1} \circ \bar x_i \circ \bar x_{i+1}, && 1\le i\le n-2 \tag{$\mathcal{T}^7$} \\[2mm]
\bar d_{i+1} \circ a_i \circ x_{i+1}&=d_i \circ a_{i+1}\circ \bar x_i, && 1\le i\le n-2\tag{$\mathcal{T}^8$} \\[2mm]
\bar d_{i+1} \circ a_i \circ \bar x_{i+1}&=d_i \circ a_{i+1}\circ  x_i, && 1\le i\le n-2\tag{$\mathcal{T}^{8'}$} \\[2mm]
\bar x_{i+1} \circ  u_i \circ d_{i+1} &= x_i \circ u_{i+1}\circ \bar d_i, && 1\le i\le n-2\tag{$\mathcal{T}^{9}$} \\[2mm]
\bar x_{i+1} \circ u_i \circ d_{i+1} &= x_i \circ u_{i+1} \circ \bar d_i, && 1\le i\le n-2\tag{$\mathcal{T}^{9'}$} \\[2mm]
%\end{align}
%\begin{align}
\bar d_i \circ x_{i+1}\circ d_i &= d_{i+1}\circ x_i \circ \bar d_{i+1 }, && 1\le i\le n-2\tag{$\mathcal{T}^{10}$} \\[2mm]
\bar d_i \circ \bar x_{i+1}\circ d_i &= d_{i+1}\circ \bar x_i \circ \bar d_{i+1 }, && 1\le i\le n-2\tag{$\mathcal{T}^{11}$} \\[2mm]
u_{i+1}\circ a_i &=u_i\circ a_i = u_i \circ a_{i+2}, && 1\le i\le n-1\tag{$\mathcal{T}^{12}$} \\[2mm]
h_i\circ l_{i+1} &=l_i\circ h_i = l_i \circ h_{i+1}, && 1\le i\le n-1\tag{$\mathcal{T}^{13}$} \\[2mm]
l_{i+2}\circ a_i &= l_{i+1}\circ a_i = l_i \circ a_i = l_i \circ a_{i+1}, && 1\le i\le n-1\tag{$\mathcal{T}^{14}$} \\[2mm]
u_{i+1}\circ h_i &= u_i \circ h_{i+1} = u_i\circ h_i = u_i \circ h_{i+2}, && 1\le i\le n-1\tag{$\mathcal{T}^{14'}$}\\[2mm]
a_i \circ l_i&=h_{i+1}, && 1\le i\le n-1\tag{$\mathcal{T}^{15}$}\\[2mm]
a_i \circ l_{i+1}&=h_{i}, && 1\le i\le n-1\tag{$\mathcal{T}^{15'}$} \\[2mm]
h_i\circ u_i&=l_{i+1}, && 1\le i\le n-1\tag{$\mathcal{T}^{16}$} \\[2mm]
h_{i+1} \circ u_i&=l_{i}, && 1\le i\le n-1\tag{$\mathcal{T}^{16'}$} \\[2mm]
h_{i+1}\circ d_i&=h_i, && 1\le i\le n-1\tag{$\mathcal{T}^{17}$} \\[2mm]
h_i \circ \bar d_i&=h_{i+1}, && 1\le i\le n-1\tag{$\mathcal{T}^{17'}$} \\[2mm]
\bar d_i \circ l_{i+1}&=l_i, && 1\le i\le n-1\tag{$\mathcal{T}^{18}$} \\[2mm]
d_i\circ l_i&=l_{i+1}, && 1\le i\le n-1\tag{$\mathcal{T}^{18'}$}
%a_i \circ u_i&=\delta\,id_{i-1}\otimes\emptyset_2\otimes\id_{n-i-1}, && 1\le i\le n-1\tag{$\mathcal{T}^{19}$} \\[2mm]
%h_i\,\circ  l_i &= \delta\, \id_{i-1}\otimes\emptyset\otimes\id_{n-i}\tag{$\mathcal{T}^{20}$}&& 1\le i\le n 
\end{align}
extended by the following extra relations in $\delta \mathcal{T}_n$:
\begin{align*}
    a_i \circ u_i&=\delta\,\phi_i \circ \phi_{i+1}, && 1\le i\le n-1\tag{$\mathcal{T}^{19}$} \\[2mm]
h_i\,\circ  l_i &= \delta\, \phi_i\tag{$\mathcal{T}^{20}$}
&& 1\le i\le n 
\end{align*}
\medskip
\noindent\textbf{Far commuting for disjoint support:}
Whenever two generators act on disjoint tensor blocks (e.g.\ $|i-j|>1$ for
two-strand generators), they commute; we do not list these relations separately.

\begin{definition}\label{def:regular-framed_tangloid_algebra} We call the tangloid algebra $\mathcal{T}_n$ \textit{regular tangloid algebra}, denoted $r\mathcal{T}_n$, to have the same definition as the tangloid algebra $\mathcal{T}_n$, but with the relations $(\mathcal{T}^1)$ and $(\mathcal{T}^{1'})$ omitted.
Furthermore, $\mathcal{T}_n$ gives rise to the \textit{framed tangloid category}, denoted $f\mathcal{T}_n$, having the same definition as $\mathcal{T}_n$, except for relations $(\mathcal{T}^1)$ and $(\mathcal{T}^{1'})$  which are replaced by the framing preserving relations depicted in Figure~\ref{fig:framed-ex R1}.
    
\end{definition}

% some of which may merely act as placeholders,
%%%%%%%%%%%%%%%%%%%%%%%%%

\section{The Algebra of Braidoids }\label{ss:subalgebra_of_braidoids}

We define the braidoid algebra $\mathcal{B}_n$ related to the extended braidoid category $\mathbf{Brd}_{\Ti} \subset\Ti$ of Definition~\ref{def:extended braidoid category}. The braidoid algebra $\mathcal{B}_n$ provides an algebraic structure in the set of braidoids, sought in \cite{gugumcu2017knotoids,Gugumcu2017}, filling thus a gap in the literature.

\begin{definition}[Braidoid algebra $\mathcal{B}_n$]\label{def:braidoid-subalgebra}
For $n\ge 0$ the \emph{braidoid algebra of degree $n$} is defined as the
$\mathbb{C}$–algebra
\[
  \mathcal{B}_n \, := \,\C \, \mathrm{End}_{\mathbf{Brd}_{\Ti}}(n)\,\
\]
where \[\mathrm{End}_{\mathbf{Brd}_{\Ti}}(n) := \Hom_{\mathbf{Brd}_{\Ti}}(n,n).\] 
It follows that $\mathcal{B}_n$ has as generators: $x_i$  (resp.\  $\bar x_i$) corresponding to the positive (resp.  negative) crossings    for $1\le i\le n-1$,    $h_i$ and $l_i$ the head and leg endpoints at the $i$th position, with all other strands vertical, for $1\le i\le n$, and   the diagrams obtained by inserting the empty set map $(\emptyset)$  at the $i$-th position,  $\phi_i$, with all other strands vertical, for $1\le i\le n$.  
\end{definition} 

$\mathcal{B}_n$ also includes  the 
        diagonal connectors $d_i$  joining $(i,\mathrm{bottom})$ to
        $(i{+}1,\mathrm{top})$  with all
        other strands  vertical, and $\bar d_i$  joining $(i{+}1,\mathrm{bottom})$ to
        $(i,\mathrm{top})$   with all
        other strands vertical, which are elements of special interest.  These elements are subjected to the composition rules involving the generators $\phi_i$ listed in Subsection~\ref{ss:composition rules empty}, excluding~(\ref{item:2}), (\ref{item:3}), (\ref{item:12}), (\ref{item:13}), which involve the elements $u_i$ and $a_i$, and  relations of the tangloid algebra $\mathcal{T}_n$ which do not contain the generators $u_i$ and $a_i$. For completeness we list the relations below:
\begin{align}
\bar d_i  \circ d_i&=\phi_{i+1}, && 1\le i\le n-1\tag{$\mathcal{T}^3$} \\[2mm]
d_i \circ \bar d_i&=\phi_i,\tag{$\mathcal{T}^{3'}$} && 1\le i\le n-1 \\[2mm]
x_i\circ  \bar x_i &= \id_n = \bar x_i \circ  x_i, && 1\le i\le n-1\tag{$\mathcal{T}^6$} \\[2mm]
\bar x_i\circ  \bar x_{i+1} \circ \bar x_i &= \bar x_{i+1} \circ \bar x_i \circ \bar x_{i+1}, && 1\le i\le n-2 \tag{$\mathcal{T}^7$} \\[2mm]
\bar d_i \circ x_{i+1}\circ d_i &= d_{i+1}\circ x_i \circ \bar d_{i+1 }, && 1\le i\le n-2\tag{$\mathcal{T}^{10}$} \\[2mm]
\bar d_i \circ \bar x_{i+1}\circ d_i &= d_{i+1}\circ \bar x_i \circ \bar d_{i+1 }, && 1\le i\le n-2\tag{$\mathcal{T}^{11}$} \\[2mm]
h_i\circ l_{i+1} &=l_i\circ h_i = l_i \circ h_{i+1}, && 1\le i\le n-1\tag{$\mathcal{T}^{13}$} \\[2mm]
h_{i+1}\circ d_i&=h_i, && 1\le i\le n-1\tag{$\mathcal{T}^{17}$} \\[2mm]
h_i \circ \bar d_i&=h_{i+1}, && 1\le i\le n-1\tag{$\mathcal{T}^{17'}$} \\[2mm]
\bar d_i \circ l_{i+1}&=l_i, && 1\le i\le n-1\tag{$\mathcal{T}^{18}$} \\[2mm]
d_i\circ l_i&=l_{i+1}, && 1\le i\le n-1\tag{$\mathcal{T}^{18'}$}
%h_i\,\circ  l_i &= \delta\, \id_{i-1}\otimes\emptyset\otimes\id_{n-i}\ \quad (\delta\in\mathbb C) \tag{$\mathcal{T}^{20}$}&& 1\le i\le n 
\end{align}

\begin{remark}
Elements in the braidoid algebra arising as compositions of generators are not necessarily braidoids in the geometric sense. On the other hand any braidoid can be expressed as a composition of the generators of the braidoid algebra. Our construction provides an extended algebraic framework for expressing geometric braidoids.
\end{remark}

\begin{remark}
There is a natural analogy between the braidoid framework and the inverse
braid monoid $IB_n$ of Easdown and Lavers \cite{EasdownLavers2004}.
The latter extends the braid group by allowing  strands of a braid to be
deleted, so that its elements are represented by partial braids. In the
braidoid setting, partiality arises diagrammatically
by strands with interior endpoints. Thus, at the diagrammatic level,
braidoids may be regarded as an endpoint-enhanced extension of the
partial-braid viewpoint underlying the inverse braid monoid.
 After linearisation, this analogy passes to the algebraic level: the
braidoid subalgebra of $\mathcal{T}_n$ may be compared with the monoid
algebra $\mathbb{C}[IB_n]$. A precise relationship between these two
algebras, including the existence of natural homomorphisms or embeddings,
is left for future investigation.
\end{remark}

We conclude the section with a comparison with the study in \cite{gugumcu2017knotoids,Gugumcu2017}.

\begin{remark}
In \cite{gugumcu2017knotoids,Gugumcu2017} the placeholders are called `implicit points' and generators are referred to as elementary blocks. The set of elementary blocks include the shifting blocks, corresponding to our diagonal elements, and identity elements with an empty vertical position (placeholder). But these last elements can arise  from composition of diagonal elements $d_i\bar d_i$ and $d_i\bar d_i$. 
Furthermore, in the composition rules in  \cite{gugumcu2017knotoids,Gugumcu2017},   usual ends can be concatenated with only usual ends at the same
position so that the resulting diagram contains strands stretching from top to bottom row. 
%This restriction leads to the need of including also crossing generators with an empty vertical position (placeholder). In our construction this element arises  from composition of a crossing generator with  $d_i\bar d_i$ or $d_i\bar d_i$. 
All relations listed in \cite{Gugumcu2017} are included in the relations of our braidoid algebra. For example, we can write  relation 12 in \cite{Gugumcu2017} as $\bar d_i\circ \bar d_{i+1} \circ x_{i+2} \circ d_{i+1} d_i= \bar d_i\circ d_{i+2} \circ x_{i+1} \circ d_i \circ \bar d_{i+2} $, and relation 13 as $\bar d_i\circ \bar d_{i+1} \circ x_{i+2} \circ d_{i+1} d_i=  d_{i+2}\circ d_{i+1} \circ x_{i} \circ \bar d_{i+1} \circ \bar d_{i+2}$. Our construction ensures that we have obtained a full set of relations. Finally,  we point out a misprint in relations 5 in \cite{Gugumcu2017}:  $
\sigma_i ^{\pm 1}\, l_j
=
l_j \,\bigl(\sigma_i ^ {(j)}\bigr )^{\pm 1}$. The correct one is: \[
\bigl(\sigma_i ^ {(j)}\bigr )^{\pm 1}\, l_j
=
l_j \,\sigma_i^{\pm 1},
\qquad
\text{for} \ \ i+2 \le j \le k \ \ 
\text{or} \ \ 1 \le j \le i-1.
\]
\end{remark}

%%%%%%%%%%%%%%%%%%%%%%%%%%%%%%%%%%%%%%%%%%%%%%%%%%%%%%%

\section{A diagrammatic bilinear form in the reduced tangloid algebra} \label{sec:pairing-matrix}

In this section we define the standard diagrammatic pairing  on the reduced tangloid algebra
 $\delta \mathcal{T}_n$. 
 \begin{definition}[Reduced tangloid  algebra $\mathcal{T}_n$]\label{def:reduced-tangloid-algebra}
 
 We  define the \emph{reduced tangloid algebra of degree $n$}, $\mathcal{\delta T}_n$, as the quotient of $\mathcal{T}_n$  under the extra relations where a parameter
$\delta\in\mathbb C$ appears when  a closed circle is formed,  when composing $a_iu_i$, or when  the trivial knotoid is formed by the composition $h_il_i$. 
\end{definition} 
 
 By Proposition~\ref{prop:TA-diagram-realization}, $\delta \mathcal T_n$ can be identified with the $\C$--linear span of all  equivalence classes of tangloid diagrams $n \to n$, where some points may be placeholders, representing
endomorphisms $n\to n$, modulo the extra reduced relations.

%%%%%%%%%%%%%%%%%%%%%%%%%%%%%%%%%%%%%%%%%%%%%%%%%%%%%%%%%%%%%%%%%

\subsection{The reflection anti-involution} \label{sec:star}

Let $*\colon \delta\mathcal{T}_n\to \delta\mathcal{T}_n$ denote the \emph{vertical reflection} of
diagrams in the horizontal axis of the rectangle.
Since we are in the endomorphism algebra $\End_{{\Ti}(n)}$, reflection preserves
the object $n$ and hence induces a $\C$--linear map on $\delta\mathcal{T}_n$.

\begin{lemma}[Anti-involution]\label{lem:star-antiinv}
The reflection map $*$ is an involutive anti-automorphism:
\[
(ab)^*=b^*a^*,\qquad (\id)^*=\id,\qquad (m^*)^*=m
\]
for  $a,b,m \in \delta\mathcal{T}_n$.
\end{lemma}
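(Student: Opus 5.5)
The plan is to define $*$ first on the level of generators and words, then check that it respects all defining relations, and only then read off the three identities. On the free level, a morphism $n\to n$ is a word (vertical composite) in the generators $x_i,\bar x_i,u_i,a_i,h_i,l_i,\phi_i$. Reflection in the horizontal axis sends each generator to its mirror image:
\[
x_i\mapsto \bar x_i,\quad \bar x_i\mapsto x_i,\quad u_i\mapsto a_i,\quad a_i\mapsto u_i,\quad h_i\mapsto l_i,\quad l_i\mapsto h_i,\quad \phi_i\mapsto \phi_i .
\]
Under the conventions of Figures~\ref{fig:UTCg} and \ref{fig:g_TA}, reflecting a crossing in a horizontal line exchanges over and under strands, so it interchanges $X_+$ and $X_-$. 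On words, $*$ reverses the order, $(g_1\circ\cdots\circ g_k)^*=g_k^*\circ\cdots\circ g_1^*$. This is because reflection reverses vertical stacking: the top of $ab$ becomes the bottom. As a consequence, $d_i^*=\bar d_i$ and $\bar d_i^*=d_i$, since the reflection of a diagonal joining $(i,\mathrm{bottom})$ to $(i{+}1,\mathrm{top})$ joins $(i{+}1,\mathrm{bottom})$ to $(i,\mathrm{top})$. I will verify this algebraically from $d_i=\phi_i\circ x_i$ and the rules in item~(\ref{item:4}) of Subsection~\ref{ss:composition rules empty}. By construction this map on the free monoid is an anti-homomorphism and an involution, and it fixes the empty word $\id_n$.

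The main step is to show that this anti-homomorphism descends to the quotient $\delta\mathcal{T}_n$. This requires that the $*$-image of every defining relation is again a consequence of the relations. I will go through the list and observe that the relations are essentially closed under reflection and come in mirror pairs:
\begin{itemize}
\item $(\mathcal{T}^1)\leftrightarrow(\mathcal{T}^{1'})$, $(\mathcal{T}^{4})\leftrightarrow(\mathcal{T}^{5})$, $(\mathcal{T}^{4'})\leftrightarrow(\mathcal{T}^{5'})$, $(\mathcal{T}^{14})\leftrightarrow(\mathcal{T}^{14'})$, and $(\mathcal{T}^{15}),(\mathcal{T}^{15'})\leftrightarrow(\mathcal{T}^{16'}),(\mathcal{T}^{16})$;
\item $(\mathcal{T}^{17}),(\mathcal{T}^{17'})\leftrightarrow(\mathcal{T}^{18}),(\mathcal{T}^{18'})$ and $(\mathcal{T}^{8})\leftrightarrow(\mathcal{T}^{9})$;
\item $(\mathcal{T}^{3})$, $(\mathcal{T}^{6})$, $(\mathcal{T}^{7})$ (which reflects to the positive braid relation, derivable from $(\mathcal{T}^7)$ and $(\mathcal{T}^6)$), $(\mathcal{T}^{10})\leftrightarrow(\mathcal{T}^{11})$, $(\mathcal{T}^{12})$, $(\mathcal{T}^{13})$, $(\mathcal{T}^{19})$ and $(\mathcal{T}^{20})$, whose reflections either fix them or swap sides.
\end{itemize}
Scalars are preserved, since the reflection of a circle is a circle and the reflection of $h_il_i$ is $h_il_i$. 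The composition rules of Subsection~\ref{ss:composition rules empty} likewise occur in mirror pairs; for example, the two lines of each of items (\ref{item:8}) and (\ref{item:9}) are swapped. Far-commutation is symmetric. Conceptually, this reflects the fact that the local moves of $\Ti$ are topological moves of tangloid diagrams, and the set of such moves is closed under the mirror in a horizontal line, because forbidden moves stay forbidden and Reidemeister and swing moves map to moves of the same kind.

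This relation-by-relation check is the main obstacle. Some listed relations, such as $(\mathcal{T}^{8})/(\mathcal{T}^{9})$ and the index conventions in $(\mathcal{T}^{4})$--$(\mathcal{T}^{5'})$, may only match their mirrors after reindexing or after applying $(\mathcal{T}^3)$ and the diagonal rules. For each such case I would first try to produce the mirrored relation explicitly from its partner together with the diagonal composition rules. Failing that, I would argue at the level of $\Ti$ via Proposition~\ref{prop:TA-diagram-realization}, where the relations are local isotopies and the mirror of an isotopy is an isotopy.

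Once well-definedness is established, extend $*$ $\C$-linearly. Then $(ab)^*=b^*a^*$ holds on basis diagrams because of the word reversal, and it extends to all elements by bilinearity. The identity $\id^*=\id$ holds because $n$ vertical strands are reflection-invariant. The identity $(m^*)^*=m$ holds because $*$ is an involution on generators and reversing a word twice returns the original word.
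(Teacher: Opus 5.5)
Your proposal follows the paper's proof: the paper uses the same images ($x_i^*=\bar x_i$, $u_i^*=a_i$, $h_i^*=l_i$, $\phi_i^*=\phi_i$, and conversely) and extends $*$ by reversing products and $\C$-linearly, but it never checks that this respects the defining relations, so your mirror-pair verification supplies the well-definedness step the paper leaves implicit. When you carry out that check, a few corrections are needed:
\begin{itemize}
\item the pairing should read $(\mathcal{T}^{15})^*=(\mathcal{T}^{16})$ and $(\mathcal{T}^{15'})^*=(\mathcal{T}^{16'})$;
\item $(\mathcal{T}^{2})^*=(\mathcal{T}^{2'})$ should be added to your list;
\item $(\mathcal{T}^{12})$ is not literally self-mirror, since its image is $u_ia_{i+1}=u_ia_i=u_{i+2}a_i$;
\item the mirror of $(\mathcal{T}^{8'})$, namely $x_{i+1}u_id_{i+1}=\bar x_iu_{i+1}\bar d_i$, is absent from the list because the printed $(\mathcal{T}^{9'})$ duplicates $(\mathcal{T}^{9})$.
\end{itemize}
The last two cases close up only at the level of $\Ti$, through the reflection-invariant relation $[T_{12}]$ and the mirror pair $[T_8]'\leftrightarrow[T_9]'$, so that is where your fallback argument should be run.
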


\begin{proof}
On the distinguished generators of Definition~\ref{def:tangloid-algebra} one has
\[
(x_i)^*=\bar x_i,\qquad (\bar x_i)^*= x_i,\qquad (u_i)^*=a_i,\qquad (a_i)^*=u_i,
\]
\[
(h_i)^*=l_i,\qquad (l_i)^*=h_i,
\qquad (\phi_i)^*=\phi_i\]
and $*$ extends multiplicatively by swapping factors and $\C$--linearly to all of $\delta\mathcal{T}_n$.
\end{proof}

\begin{definition}[Loop count]\label{def:kappa}
For a diagram $D\in\delta\mathcal{T}_n$, let $\kappa(D)$ be the number of simple closed components  
$0\to 0$ (forgetting the placeholders), including the trivial knotoid,   that appear when simplifying $D$ using the defining relations
of $\delta\mathcal{T}_n$, so that each such component contributes a scalar factor $\delta\in\mathbb C$. So, $\delta^{\,\kappa(D)}$ is the total scalar obtained by deleting all simple 
closed $0\to 0$ trivial components from $D$ including the trivial knotoid.
\end{definition}

%%%%%%%%%%%%%%%%%%%%%%%%%%%%%%%%%%%%%%%%%%%%%%%

\subsection{A diagrammatic bilinear pairing on $\delta \mathcal{T}_n$}\label{ss:bilinear pairing}

\begin{definition}[Diagrammatic pairing on $\delta \mathcal T_n$]\label{def:An-pairing} 
For any  diagrams $D_1,D_2$ in the tangloid algebra $\delta\mathcal{T}_n$ we define the pairing:
\[
\langle D_1,D_2\rangle
\;:=\;
\delta^{\,\kappa(D_2^*\,D_1)}.
\]
Here $D_2^*\, D_1\in\delta\mathcal{T}_n$ is formed by stacking $D_2^*$, the vertical reflection of $D_2$, on top of $D_1$,
 and $\kappa(\,\cdot\,)$ denotes the
number of simple closed loop components $a_iu_i$ and $h_il_i$ or their equivalents. We extend this rule $\C$--bilinearly to all of $\delta\mathcal{T}_n$. Thus, if
\[
m_1=\sum_r \alpha_r D_r,\qquad m_2=\sum_s \beta_s E_s,
\]
with $\alpha_r,\beta_s\in\C$ and $D_r,E_s$ diagrams in $\delta\mathcal{T}_n$, then
\[
\langle D_1,D_2\rangle
\;:=\;
\sum_{r,s}\alpha_r\beta_s\,\langle D_r,E_s\rangle.
\]
\end{definition}

\begin{proposition}[Basic properties]\label{prop:pairing-properties}
The pairing $\langle\cdot,\cdot\rangle$ is a symmetric $\C$--bilinear form on
$\delta\mathcal{T}_n$:
\[
\langle D_1,D_2\rangle=\langle D_2,D_1\rangle
\qquad\text{for all }D_1,D_2\in\delta\mathcal{T}_n.
\]
It also satisfies the adjointness identity
\begin{equation}\label{eq:An-adjoint}
\langle a\,D,\,b\rangle \;=\; \langle D,\,a^*b\rangle,
\qquad a,D,b\in\delta\mathcal{T}_n.
\end{equation}
\end{proposition}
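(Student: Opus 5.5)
Bilinearity holds by construction: Definition~\ref{def:An-pairing} sets the value on basis diagrams and extends $\C$--bilinearly. So everything reduces to checking symmetry and adjointness on single diagrams $D_1,D_2,D,a,b$. The adjointness identity then extends to arbitrary $a,D,b$ by bilinearity together with $\C$--linearity of $*$.

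\textbf{Adjointness.} I would derive this first, directly from Lemma~\ref{lem:star-antiinv}. For diagrams $a,D,b$,
\[
\langle aD,b\rangle=\delta^{\kappa(b^*(aD))},\qquad \langle D,a^*b\rangle=\delta^{\kappa((a^*b)^*D)}.
\]
Lemma~\ref{lem:star-antiinv} gives $(a^*b)^*=b^*(a^*)^*=b^*a$. Associativity of composition in $\Ti$ then gives $b^*(aD)=(b^*a)D$. The two stacked diagrams are therefore the same element of $\delta\mathcal{T}_n$. Since $\kappa$ is read off that element, the two values agree.

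\textbf{Symmetry.} We need $\kappa(D_2^*D_1)=\kappa(D_1^*D_2)$. By Lemma~\ref{lem:star-antiinv}, $(D_2^*D_1)^*=D_1^*D_2$. So it suffices to prove the following.

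\emph{Claim.} $\kappa(E^*)=\kappa(E)$ for every diagram $E$.

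To prove the claim, reflect the whole reduction sequence that simplifies $E$. The generator assignments in the proof of Lemma~\ref{lem:star-antiinv} swap $u_i\leftrightarrow a_i$ and $h_i\leftrightarrow l_i$, and fix $\phi_i$. Hence reflection sends the relations $(\mathcal{T}^{19})$ and $(\mathcal{T}^{20})$ to relations of the same shape:
\[
(a_iu_i)^*=u_i^*a_i^*=a_iu_i,\qquad (h_il_i)^*=l_i^*h_i^*=h_il_i .
\]
Reflection also carries every other defining relation to another defining relation; the list is built in reflected pairs such as $(\mathcal{T}^{k})/(\mathcal{T}^{k'})$. Consequently each closed trivial component removed while simplifying $E$ corresponds to exactly one removed while simplifying $E^*$. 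Geometrically this is just the statement that reflection maps closed loops and trivial knotoids to closed loops and trivial knotoids.

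\textbf{Main obstacle.} The hard step is showing that $*$ really is well defined on $\delta\mathcal{T}_n$, i.e.\ that it preserves the defining relations, and that $\kappa$ is well defined independently of the chosen reduction. The proof of Lemma~\ref{lem:star-antiinv} only specifies $*$ on generators. So my first task is to run through the list $(\mathcal{T}^1)$--$(\mathcal{T}^{20})$ and the $\phi_i$ composition rules, checking that each reflects to a relation in the list. I would take $d_i^*=\bar d_i$, which is forced by $d_i=\phi_i x_i$ up to the rules of Subsection~\ref{ss:composition rules empty}.

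For $\kappa$, I would use the diagram realisation of Proposition~\ref{prop:TA-diagram-realization}. The number of closed $0\to 0$ trivial components is an isotopy-invariant count of connected components carrying no boundary points, and every local relation preserves this count. With that in hand, the count for a diagram and for its mirror image coincide.
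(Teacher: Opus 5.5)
Your argument is essentially the paper's. Adjointness holds because $(a^*b)^*=b^*a$, so both sides are computed on the same stacked diagram $b^*aD$. Symmetry holds because $D_1^*D_2$ is the vertical reflection of $D_2^*D_1$ and reflection does not change the number of closed trivial components. The paper simply asserts that last point and takes $*$ as given by Lemma~\ref{lem:star-antiinv}, so your explicit attention to the well-definedness of $*$ and $\kappa$ goes beyond what it checks.

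When you run that check, match relations by actually reflecting them rather than by their labels, because the primes do not consistently mark vertical mirror images. For example, $(\mathcal{T}^{8})$ reflects to $(\mathcal{T}^{9})$, $(\mathcal{T}^{15})$ to $(\mathcal{T}^{16})$ and $(\mathcal{T}^{17})$ to $(\mathcal{T}^{18})$, while $(\mathcal{T}^{3})$ and $(\mathcal{T}^{3'})$ are each self-dual. Also take the mirror of $(\mathcal{T}^{8'})$ from $[T_9]'$, since the printed $(\mathcal{T}^{9'})$ merely repeats $(\mathcal{T}^{9})$.
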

\begin{proof}
    It is enough to verify the statements on basis elements, that is, diagrams,  and then extend $\C$--bilinearly.  Let $D_1,D_2\in \delta\mathcal{T}_n$ be generators.  By definition, $\langle D_1,D_2\rangle \;=\; \delta^{\,\kappa(D_2^*\,D_1)}$. Interchanging $D_1$ and $D_2$ gives the diagram $D_1^*\,D_2$ obtained by reflecting the diagram $D_2^*\,D_1$ vertically.  This reflection does not change the number of closed components. Hence
\[
\langle D_1,D_2\rangle=\langle D_2,D_1\rangle.
\]
Therefore the pairing is symmetric. Since the definition is extended linearly in each variable, it is a symmetric $\C$--bilinear form on $\delta\mathcal{T}_n$.

Now let $a,D,b\in\delta\mathcal{T}_n$ be basis diagrams. Then
\[
\langle aD,b\rangle
\]
is computed by gluing $b^*$ on top of the composite $aD$. Thus the resulting
 diagram is
\[
b^*aD.
\]
On the other hand,
\[
\langle D,a^*b\rangle
\]
is computed by gluing $(a^*b)^*$ on top of $D$. Since $*$ is an
anti-involution, we have
\[
(a^*b)^*=b^*a.
\]
Therefore, the  diagram used to compute $\langle D,a^*b\rangle$ is also
\[
b^*aD.
\]
Hence, both pairings count the same number of closed components, and so
\[
\langle aD,b\rangle=\langle D,a^*b\rangle.
\]
Extending by $\C$--linearity gives the identity for all
$a,D,b\in\delta\mathcal{T}_n$.

\end{proof}

\begin{example}
Here  examples of bilinear form in  reduced tangloid algebra show that \[
\langle D_1,D_2\rangle=\langle D_2,D_1\rangle=\delta^1.
\]
In the top example,  there is one trivial knotoid where in the bottom example there is one trivial knot. 
    \begin{figure}[H]
        \centering
\includegraphics[width=0.75\linewidth]{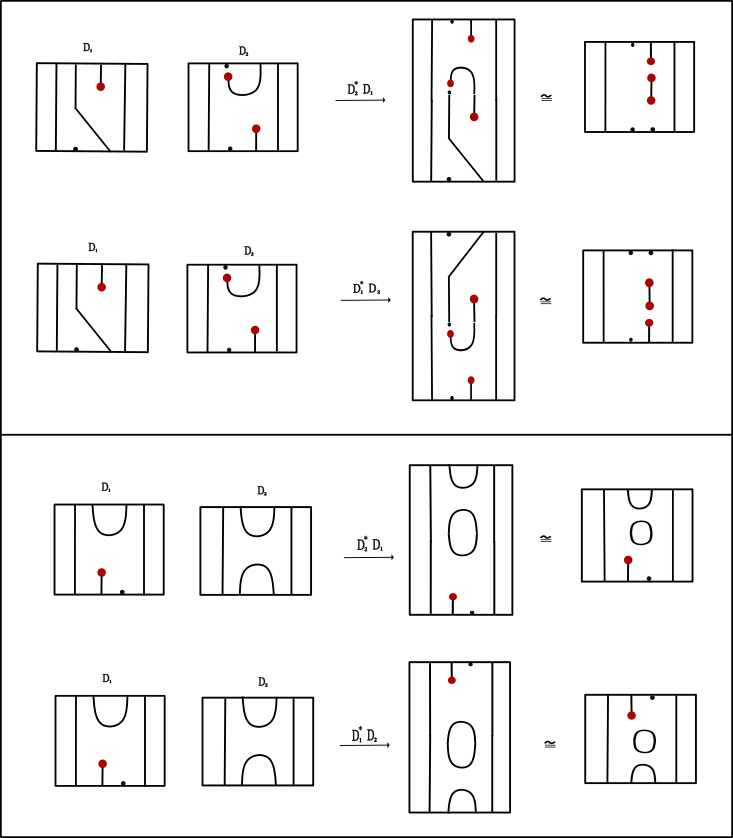}
        \caption{Example of the bilinear form in the  reduced tangloid algebra}
        \label{fig:example_bilinear_pairing}
    \end{figure}
\end{example}

%%%%%%%%%%%%%%%%%%%%%%%%%%%%%%%%%%%%%%%%%%%%%%%%%%%%%%%%%%%%%%%%%%%%%%%%

\section{Conclusion} \label{sec:conclusion}

In this paper, we have developed a diagrammatic and algebraic framework for
the study of tangloids. Tangloids include  multi-knotoids, linkoids and braidoids as special categories. We first introduced the unoriented tangloid category
$UTC$, arising from the unoriented welded tangleoid category $UWTC$ by
modifying its defining relations. This categorical construction provides a
setting in which tangloid diagrams with interior endpoints can be studied
within a strict monoidal framework.

We then introduced the extended unoriented tangloid category $\Ti$. To
construct this extension, we first adjoined the auxiliary empty morphisms
$\emptyset_t$ and $\emptyset_b$, which act as place-keepers at the top and
bottom of a diagram, respectively, and are represented diagrammatically by
points. Together with the empty generator $\emptyset$, these morphisms allow
the generators to be extended so that they can be regarded as endomorphisms
of a fixed object $n$. This extension provides the appropriate categorical
setting for defining the tangloid algebra $\mathcal{T}_n$ as the
linearisation of the endomorphism space $\operatorname{End}_{\Ti}(n)$.
 We further defined the reduced tangloid algebra
$\delta\mathcal{T}_n$ by imposing additional reduction relations under which
a trivial knot and a trivial knotoid are each replaced by the scalar
$\delta\in\mathbb{C}$.

The construction also contains natural braidoid-type structures. In
particular, the braidoid category occurs as a subcategory of $UTC$, while
the corresponding extended braidoid category is realised as a subcategory
of $\Ti$. Consequently, its linearisation gives rise to a natural braidoid
subalgebra of the tangloid algebra. This places braidoid-type structures
within the broader tangloid framework and provides an algebraic setting in
which their relationship with other diagrammatic structures can be studied.

Having established these categorical and algebraic foundations, we defined
a diagrammatic bilinear form on the reduced tangloid algebra. The pairing is
constructed using the involution induced by diagrammatic reflection together
with the closure of the resulting composed diagrams. This construction
provides a natural algebraic structure associated with the diagrammatic
calculus and opens the way to the study of degeneracy, radicals, and
representations of tangloid algebras.

Several directions remain for future investigation. One natural direction
is the introduction of a Temperley--Lieb tangloid algebra
$\TL\mathcal{T}_n$ and the study of its relationship with
$\mathcal{T}_n$ and $\delta\mathcal{T}_n$. More generally, it would be
interesting to formulate analogues of classical diagrammatic algebras within
the tangloid framework and to determine how their defining generators and
relations arise from tangloid diagrams. This is subject of sequel work.

We shall also investigate further the extended braidoid category framework in connection with the  inverse braid monoid. At the algebraic level, this suggests a natural extension of the monoid algebra $\mathbb{C}[IB_n]$ by the braidoid algebra $\mathcal{B}_n$. 

Another direction is the development
of a representation theory for tangloid algebras, including the construction
and study of potentially non-unital representations. These developments may
clarify the connections between tangloid algebras, classical diagrammatic
algebras, and their associated representation-theoretic structures.

Finally, an oriented version of the construction may be obtained by introducing
oriented counterparts of the generators and adapting the defining relations
to respect the orientations. The corresponding oriented tangloid algebras
and their representation theory provide a natural direction for future work.

\section*{Acknowledgements}
This project was funded by the Deanship of Scientific Research (DSR) at King Abdulaziz University, Jeddah, Saudi Arabia, under grant no. IPP-156-665-2025. The first author gratefully acknowledges the DSR for technical and financial support, and  thanks Prof. Ahmad Alghamdi for valuable support and insightful discussions. The second author gratefully acknowledges hospitality and support of the Institut Henri Poincaré (UAR 839 CNRS-Sorbonne Université), and LabEx CARMIN (ANR-10-LABX-59-01), and also  discussions with Louis H. Kauffman and with Ganna Kudryavtseva. 

\section*{Conflict of interest}
The authors declare no potential conflicts of interest.

% -------------------- BIBLIOGRAPHY --------------------

\end{document}